\font\tenmsb=msbm10 \font\sevenmsb=msbm7 \font\fivemsb=msbm5
\font\teneufm=eufm10 \font\seveneufm=eufm7 \font\fiveeufm=eufm5
\def\co#1{
}
\renewcommand{\epsilon}{\varepsilon}
\renewcommand{\setminus}{\smallsetminus}
\renewcommand{\emptyset}{\varnothing}
\newtheorem{theorem}{Theorem}[section]
\newtheorem{proposition}[theorem]{Proposition}
\newtheorem{corollary}[theorem]{Corollary}
\newtheorem{lemma}[theorem]{Lemma}
\newtheorem{question}[theorem]{Question}
\theoremstyle{definition}
\newtheorem{example}[theorem]{Example}
\newtheorem{definition}[theorem]{Definition}
\theoremstyle{remark}
\newtheorem{remark}[theorem]{Remark}
\newcommand{\pd}{\operatorname{pd}}
\newcommand{\cd}{\operatorname{cd}}
\newcommand{\vcd}{\operatorname{vcd}}
\newcommand{\normal}{\lhd}
\newcommand{\FF}{\mathcal{F}}
\newcommand{\Q}{\mathbb Q}
\newcommand{\Z}{\mathbb Z}
\newcommand{\R}{\mathbb R}
\newcommand{\C}{\mathbb C}
\newcommand{\F}{\mathbb F}
\newcommand{\T}{\mathbb T}
\newcommand{\GL}{\operatorname{GL}}
\newcommand{\fpinfty}{{\FP}_{\infty}}
\newcommand{\FP}{\operatorname{FP}}
\newcommand{\cohom}[3]{H^{{\raise1pt\hbox{$\scriptstyle#1$}}}(#2\>\!,#3)}
\newcommand{\tatecohom}[3]%
  {\widehat H^{{\raise1pt\hbox{$\scriptstyle#1$}}}(#2\>\!,#3)}
\newcommand{\Cohom}[3]%
  {H^{{\raise1pt\hbox{$\scriptstyle#1$}}}\big(#2\>\!,#3\big)}
\newcommand{\Tatecohom}[3]%
  {\widehat H^{{\raise1pt\hbox{$\scriptstyle#1$}}}\big(#2\>\!,#3\big)}
\newcommand{\homol}[3]{H_{{\lower1pt\hbox{$\scriptstyle#1$}}}(#2\>\!,#3)}
\newcommand{\homolog}[2]{H_{{\lower1pt\hbox{$\scriptstyle#1$}}}(#2)}
\newcommand{\eg}{\underline{\operatorname{E}}G}
\newcommand{\egamma}{\underline{\operatorname{E}}\Gamma}
\newcommand{\blah}{-}
\title[Subgroup posets]{Subgroup posets, Bredon cohomology and equivariant Euler characteristics.}
\author{Conchita Mart\'{\i}nez-P\'erez}
\address{C.~Mart\'inez-P\'erez. IUMA. Departamento de Matem\'aticas, Universidad de Zaragoza,
50009 Zaragoza, Spain} \email{conmar@unizar.es}
\thanks{Partially supported by BFM2010-19938-C03-03, Gobierno de Arag\'on and European Union's ERDF funds}
\date\today
\keywords{Bredon cohomology, virtually soluble group, proper
classifying space, poset of finite subgroups}
\subjclass[2010]{20J05, 18G35, 18G30}
\begin{document}

\begin{abstract} For discrete groups $\Gamma$ with a bound on the order of their finite subgroups, we construct Bredon projective resolutions of the trivial module in terms of projective covers of the chain complex associated to the poset of finite subgroups. We use this to give new results on dimensions of $\egamma$ and to reprove that for virtually solvable groups, $\underline{\cd}\Gamma=\vcd\Gamma$. We also deduce a formula to compute the equivariant Euler class of $\egamma$ for $\Gamma$ virtually solvable of type $\FP_\infty$ and use it to compute orbifold Euler characteristics.
\end{abstract}

\maketitle

\section{Introduction}

\noindent Already thirty years ago, Brown realized of the
relevance of the poset $\FF$ of the finite subgroups of an arbitrary (discrete) group $\Gamma$
to problems related with Euler characteristics. This poset has also been used by several authors such as  Connolly-Kozniewsky (\cite{connkoz}), Kropholler-Mislin (\cite{krophollermislin}) and L\"uck (\cite{lueck}), to construct models of $\egamma$ and to prove results on the minimal dimension of $\egamma$ for groups having a bound on the orders of their finite subgroups. Recall that a $\Gamma$-$CW$-complex $X$ is a model for $\egamma$ if $X^H\simeq\ast$ if $H$ is finite and empty otherwise, this kind of spaces have received a lot of attention in the last ten years.
The key property of the poset $\FF$ that allow these constructions can be stated non rigorously as follows: for $\FF_1:=\FF\setminus\{1\}$, the pair of spaces $(C|\FF_1|,|\FF_1|)$ where $|\FF_1|$ is the geometric realization of the poset $\FF_1$ and $C$ is the cone construction, is (non-equivariantly) homotopy equivalent to the biggest subcomplex of $\egamma$ where the group $\Gamma$ acts freely. This fact was first discovered by Connolly and Kozniewsky (\cite{connkoz}) and is closely related to the property that $|\FF_1|^H\simeq\ast$ whenever $1\neq H$ is finite which is also crucial in Brown's work.

 In this paper, we also exploit those properties of $\FF$. First, we provide a new algebraic proof of the existence of the homotopy equivalence above which works in a more general setting (for example, for arbitrary families $\mathcal{H}$ of subgroups and arbitrary coefficient fields) and
then use it to construct a Bredon projective resolution of the trivial
module in terms of projective resolutions of the chain complexes associated to certain posets of subgroups which we denote $\mathcal{H}_H$ (see Section \ref{first}). Recall that this kind of resolutions are, in the case when $\mathcal{H}=\FF$, the algebraic counterpart of the classifying spaces $\egamma$. Using
hypercohomology, this allows us to
determine $\cd_\mathcal{H}\Gamma$, the minimal length of such a projective resolution as follows:

\medskip

\noindent{\bf Theorem A. }{\it Let $\Gamma$ be a group with a bound on the $\mathcal{H}$-lengths of the subgroups in $H\in\mathcal{H}.$
Then
$${\cd}_{\mathcal{H}}\Gamma=\max_{H\in\mathcal{H}}\pd_{WH}\Sigma\widetilde{\mathcal{H}}_{H\bullet}.$$}

\medskip

\noindent Here, $\widetilde{\mathcal{H}}_{H\bullet}$ is the augmented chain complex associated to the poset $\mathcal{H}_H$,
$WH:=N_G(H)/H,$ and $\pd_{WH}$ denotes the projective dimension of a $WH$-complex (see Section \ref{first}). This result remains valid if we work with coefficients in any commutative ring $R$ with a unity. Unfortunately, this invariant is not easy to compute. So we restrict ourselves to the family of finite subgroups to prove the following result (here and in the rest of the paper we omit the subindex $R$ when we talk about projective or cohomological dimensions).
\medskip

\noindent{\bf Theorem B. }{\it  Let $\Gamma$ be a group with a bound on the orders of the finite subgroups and with
$\underline{\cd} \Gamma<\infty$. Assume also that there is an order reversing integer valued function $l:\FF\to\Z$ such that
for each $H\leq \Gamma$ finite $\pd_{WK}B(WH)\leq l(H)$ and
\begin{itemize}
\item[i)] Either $l(K)<l(H)$ for any $H<K$

\item[ii)] or $|\FF_H|\simeq\ast$.
\end{itemize}
Then $\underline{\cd} \Gamma\leq l(1)$.}

\medskip

In this result, $\underline{\cd}\Gamma:=\cd_\FF \Gamma$ and $B(WH)$ is the $WH$-module of bounded functions first defined in \cite{krophollertalelli} (see Section \ref{first}). For virtually torsion free groups $\Gamma$, $\pd_\Gamma B(\Gamma)=\vcd\Gamma$. Using Theorem B we reprove in Section \ref{solvable} the known fact that for virtually-(torsion free solvable) groups, $\underline{\cd}\Gamma=\vcd\Gamma$ (\cite{martineznucinkis}). This is known to be false for arbitrary groups because of examples constructed in \cite{LN}. We are able to do that since for this kind of groups there is a nice invariant which can play the role of the function $l(H)$ in the previous Theorem. To be more precise, we let $l(H)$ be either the Hirsch rank of the centralizer $hC_\Gamma(H)$ or $hC_\Gamma(H)+1$ according to wether our ambient group $\Gamma$ is of type $\FP_\infty$ or not. As a by-product, we also get a characterization of when $|\FF_H|\simeq\ast$ for virtually-(torsion free solvable) groups of type $\FP_\infty$  (see Theorem \ref{contractible}).

The knowledge of when $|\FF_H|\simeq\ast$ is specially useful to compute Euler classes of elementary amenable groups of type $\FP_\infty$. Given an arbitrary group $\Gamma$ one may define the Grothendieck group $A(\Gamma)$ of proper cocompact
$\Gamma$-sets and associate to each proper cocompact $\Gamma$-$CW$-complex $X$ an element
$\chi^\Gamma(X)\in A(\Gamma)$ (see \cite{luckL2}, Definition 6.84) which is invariant under equivariant homotopy equivalence. If $\Gamma$ is elementary amenable of type $\FP_\infty$, $\Gamma$ is virtually-(torsion free solvable) and there is a cocompact $\egamma$ (\cite{kmn}). We get the following formula for $\chi^\Gamma(\egamma)$:

\medskip

\noindent{\bf Theorem C. }{\it Let $\Gamma$ be elementary amenable of type $\FP_\infty$. Then
$$\chi^\Gamma(\underline{\operatorname{E}}\Gamma)=\sum_{F\in\Omega/\Gamma}([\Gamma/F]+\sum_{H<F\atop C_G(H)>1}{1\over|F:H|}e(\widetilde{\mathcal B}_{H\bullet}^F)[\Gamma/H]).$$}

Here, $\Omega$ is a set of maximal finite subgroups of $\Gamma$,
 $\widetilde{\mathcal B}_{H\bullet}^F$ is the augmented chain complex of certain poset of subgroups inside the finite group $F$ and $e(\widetilde{\mathcal B}_{H\bullet}^F)$ denotes its ordinary Euler number (see Section \ref{euler}). In some cases, all the relevant parts of this formula can be computed using elementary character theory and first integral cohomology groups. An interesting case when this formula applies is to compute Euler classes and orbifold Euler characteristics of toroidal orbifolds, we do it in several examples at the end of the paper.

\section{Algebraic version of a Theorem by Connolly-Kozniewski}\label{first}

\noindent In this Section, we are going to prove Theorems A and B.
Let $\mathcal{H}$ be an arbitrary  class of subgroups of $\Gamma$, here following \cite{symonds} we mean by class just a set of subgroups which is closed under conjugation.
Recall that a Bredon contramodule is a contravariant functor from the category of transitive $\Gamma$-sets with stabilizers in $\mathcal{H}$ and $\Gamma$-maps as morphisms to the category of $R$-modules ($R$ is a commutative ring with a unity). The category of Bredon contramodules is abelian and can be shown to have enough projectives so one can define Ext functors in the usual way. The trivial module is the constant functor with value $R$ and is denoted by $\underline R$.
 Given $H\in\mathcal{H}$, let $P^\Gamma_H$ be the functor which takes any other $T\in\mathcal{H}$ to the free $R$-module generated by the maps from $\Gamma/T$ to $\Gamma/H$, i.e. $P^\Gamma_H(T):=R<(\Gamma/H)^T>$. 
This $P^\Gamma_H$ is called the free contramodule based at $H$. Note that if $X$ is a $\Gamma$-$CW$-complex , we may associate to each $H\in\mathcal{H}$ the chain complex of $X^H$ and we get a chain complex of Bredon contramodules for $\mathcal{H}$, which is called the Bredon chain complex of $X$ respect to $\mathcal H$. If the cell stabilizers of $X$ belong to $\mathcal{H}$, this chain complex consists of free modules.
As in ordinary cohomology, $\cd_{\mathcal{H}}\Gamma$ is the minimal length of a Bredon projective resolution of the trivial module $\underline{R}$. For more precise definitions in Bredon cohomology the reader is referred to \cite{LN}, \cite{martineznucinkis} and \cite{conch}.

From now on, by chain complex we mean vanishing below zero chain complex.
Let $C_\bullet$ be a chain complex of right $\Lambda$-modules for a ring
$\Lambda$. A projective resolution of $C_\bullet$ (see for example \cite{BenI} 2.7)
is a chain complex $P_\bullet$ of projective right $\Lambda$-modules together with a map of complexes $P_\bullet\to C_\bullet$ which is a quasi-isomorphism or weak-equivalence
(i.e., yields an isomorphism on homology). Existence and uniqueness up to homotopy of
projective resolutions is easily proven as in the case of projective resolutions of
single modules (which corresponds to chain complexes concentrated in degree 0). Using
projective resolutions of chain complexes one may define Ext and Tor for chain complexes.

We denote by
$\text{pd}_{R\Gamma}C_\bullet$
the projective dimension of the chain complex $C_\bullet$, that is, the shortest length
of a projective resolution of $C_\bullet$. As in the case of modules,
$d=\text{pd}_{R\Gamma}C_\bullet$ can be also defined as the only integer such that
$\text{Ext}_\Lambda^d(C_\bullet,D_\bullet)\neq0$
for some chain complex $D_\bullet$ and
$\text{Ext}_\Lambda^m(C_\bullet,-)=0$
for any $m>d$. We let the projective dimension of the zero complex (and of any exact complex) be $-\infty$. 
From now on we will assume that $\Lambda=R\Gamma$ is a group ring.
Let $C_\bullet$ be a chain complex of $R\Gamma$-modules such that there is a morphism $C_0\to
R$. Then
we denote by $\widetilde C_\bullet:=C_\bullet\to R$ the augmented chain complex. We also consider the suspension $\Sigma\widetilde C_\bullet$ given by
$\Sigma\widetilde C_n=\widetilde C_{n-1}.$

\begin{lemma}\label{tec1} Let $U_\bullet$, $V_\bullet$ be quasi-isomorphic $\Gamma$-chain complexes. Then
$$\pd_\Gamma U_\bullet=\pd_\Gamma V_\bullet$$
\end{lemma}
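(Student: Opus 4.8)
The statement is formal: the invariant $\pd_\Gamma$ of a chain complex is read off from any projective resolution of it, and quasi-isomorphic complexes have homotopy-equivalent projective resolutions. So the strategy is (1) to produce, from the quasi-isomorphism between $U_\bullet$ and $V_\bullet$, a chain homotopy equivalence between chosen projective resolutions $P_\bullet\to U_\bullet$ and $Q_\bullet\to V_\bullet$; (2) to observe that the family of projective resolutions of a complex depends only on the homotopy type of one of them; (3) to conclude that the two projective dimensions are computed from the same data.

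\textbf{Step 1.} Choose projective resolutions $\pi_U\colon P_\bullet\to U_\bullet$ and $\pi_V\colon Q_\bullet\to V_\bullet$; these exist since all complexes here vanish below degree $0$. Given a quasi-isomorphism $f\colon U_\bullet\to V_\bullet$, the comparison theorem for projective resolutions of chain complexes (\cite{BenI}, 2.7) lifts $f\circ\pi_U$ through $\pi_V$ to a chain map $\widetilde f\colon P_\bullet\to Q_\bullet$ with $\pi_V\circ\widetilde f$ chain homotopic to $f\circ\pi_U$. Since $\pi_U$, $\pi_V$ and $f$ are quasi-isomorphisms, so is $\widetilde f$; and a quasi-isomorphism between bounded-below complexes of projective modules is a chain homotopy equivalence. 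Hence $P_\bullet\simeq Q_\bullet$. If ``quasi-isomorphic'' is meant in the zig-zag sense (isomorphic in the derived category), one runs this argument along each arrow of the zig-zag, replacing the intermediate complexes by projective resolutions, and composes the resulting homotopy equivalences; either way one obtains $P_\bullet\simeq Q_\bullet$.

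\textbf{Steps 2--3.} A bounded-below complex $P'_\bullet$ of projectives admits a quasi-isomorphism to $U_\bullet$ if and only if $P'_\bullet$ is chain homotopy equivalent to $P_\bullet$: one direction composes such an equivalence with $\pi_U$, the other is again the comparison theorem applied to the two projective resolutions $P'_\bullet$ and $P_\bullet$ of $U_\bullet$. Thus the family of projective resolutions of $U_\bullet$ is precisely the family of complexes of projectives homotopy equivalent to $P_\bullet$, and likewise for $V_\bullet$ and $Q_\bullet$. Since $P_\bullet\simeq Q_\bullet$, these two families coincide, so taking the shortest length on each side gives $\pd_\Gamma U_\bullet=\pd_\Gamma V_\bullet$. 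The degenerate case is consistent: if $U_\bullet$ is exact then so is $V_\bullet$, the zero complex lies in both families, and $\pd_\Gamma U_\bullet=\pd_\Gamma V_\bullet=-\infty$.

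\textbf{Main point.} There is no genuine obstacle here; the one input that is not pure diagram chasing is the fact that a quasi-isomorphism between bounded-below complexes of projectives is a chain homotopy equivalence — the chain-complex version of the usual uniqueness statement for projective resolutions, i.e. the ``$\operatorname{Ext}$ is well defined'' phenomenon. Equivalently one can bypass Steps 2--3 and argue directly via $\operatorname{Ext}$: the homotopy equivalence $P_\bullet\simeq Q_\bullet$ induces, for every chain complex $D_\bullet$, natural isomorphisms $\operatorname{Ext}^n_{R\Gamma}(U_\bullet,D_\bullet)\cong H^n(\operatorname{Hom}_{R\Gamma}(P_\bullet,D_\bullet))\cong H^n(\operatorname{Hom}_{R\Gamma}(Q_\bullet,D_\bullet))\cong\operatorname{Ext}^n_{R\Gamma}(V_\bullet,D_\bullet)$, and then the characterization of $\pd_\Gamma$ through the vanishing of $\operatorname{Ext}$ recalled just before the lemma finishes the proof.
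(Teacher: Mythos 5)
Your proof is correct and follows essentially the same route as the paper: both lift the quasi-isomorphism to a map between chosen projective resolutions via the Comparison Theorem, observe that a quasi-isomorphism between bounded-below complexes of projectives is a homotopy equivalence, and conclude that each resolution serves as a projective resolution of either complex, so the minimal lengths agree. Your write-up merely spells out the zig-zag and degenerate cases and offers the equivalent $\operatorname{Ext}$-vanishing reformulation, which the paper leaves implicit.
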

\proof Let $P_\bullet\twoheadrightarrow U_\bullet$, $Q_\bullet\twoheadrightarrow V_\bullet$ be projective resolutions of both chain complexes, then the Comparison Theorem (see for example \cite{BenI} 2.2.7 and 2.4.2) implies that there is a quasi-isomorphism between $P_\bullet$ and $Q_\bullet$. As both are complexes of projectives, they are $\Gamma$-homotopy equivalent so both are projective covers of either of the two chains.
\qed

\begin{lemma}\label{tec2} Let $U_\bullet$, $V_\bullet$ be $\Gamma$-chain complexes which are exact everywhere except of degree 0 where they have homology $R$. Assume that $U_\bullet$ consists of projectives. Then there is a $\Gamma$-quasi-isomorphism
$$U_\bullet\to V_\bullet.$$
\end{lemma}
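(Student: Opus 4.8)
The plan is to build the quasi-isomorphism $U_\bullet\to V_\bullet$ degree by degree, using the projectivity of the $U_n$ together with the fact that $\widetilde V_\bullet$ (the augmentation $V_\bullet\to R$) is exact. The standard trick is to pass to the augmented complexes: since $U_\bullet$ and $V_\bullet$ both have homology $R$ concentrated in degree $0$, the augmented complexes $\widetilde U_\bullet$ and $\widetilde V_\bullet$ (placing $R$ in degree $-1$) are both \emph{exact}. So the task reduces to the classical lifting lemma: given a complex $\widetilde U_\bullet$ of projectives in nonnegative degrees augmented to $R$, and an exact complex $\widetilde V_\bullet$ augmented to $R$, with the identity map $R\to R$ in the bottom degree, one lifts inductively to a chain map $\widetilde U_\bullet\to\widetilde V_\bullet$ covering $\mathrm{id}_R$; restricting back to degrees $\geq 0$ gives the desired map $U_\bullet\to V_\bullet$, and it is automatically a quasi-isomorphism since both complexes have homology $R$ in degree $0$ and the map induces the identity there.

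**First** I would set $\widetilde U_{-1}=\widetilde V_{-1}=R$ with the augmentation maps $\varepsilon_U\colon U_0\to R$, $\varepsilon_V\colon V_0\to R$, and take $f_{-1}=\mathrm{id}_R$. For the inductive step, suppose $f_{n-1}\colon U_{n-1}\to V_{n-1}$ and $f_n$ on lower degrees have been constructed making the relevant squares commute (with $f_{-1}=\mathrm{id}$). Composing $U_n\overset{\partial}{\to}U_{n-1}\overset{f_{n-1}}{\to}V_{n-1}$, one checks this lands in $\ker(\partial_{n-1}^V)=\im(\partial_n^V)$ by exactness of $\widetilde V_\bullet$ and the commutativity hypothesis; since $U_n$ is projective and $\partial_n^V\colon V_n\to\im(\partial_n^V)$ is surjective, the map lifts to $f_n\colon U_n\to V_n$ with $\partial_n^V f_n=f_{n-1}\partial_n^U$. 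At the very bottom ($n=0$) the same argument applies with $\widetilde V_{-1}=R$ playing the role of $V_{n-1}$: $\varepsilon_V$ is surjective because $\widetilde V_\bullet$ is exact in degree $-1$, i.e. $H_0(V_\bullet)=R$, so $f_0$ exists with $\varepsilon_V f_0=\varepsilon_U$.

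**The map is a quasi-isomorphism** essentially for free: $H_n(U_\bullet)=0=H_n(V_\bullet)$ for $n>0$ by hypothesis, and in degree $0$ the induced map $H_0(U_\bullet)\to H_0(V_\bullet)$ fits into a commuting triangle with the augmentations and $f_{-1}=\mathrm{id}_R$, both of which identify $H_0$ with $R$, so $H_0(f)$ is the identity on $R$ and in particular an isomorphism. Thus $f$ induces isomorphisms on all homology and is a $\Gamma$-quasi-isomorphism.

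**The main obstacle** — really the only point requiring care — is bookkeeping at the bottom of the complex: one must treat the augmentation $\varepsilon_V\colon V_0\to R$ as just another differential in the exact complex $\widetilde V_\bullet$, and verify that surjectivity of $\varepsilon_V$ (equivalently $H_0(V_\bullet)=R$) is exactly what licenses the first lifting step. Everything else is the routine projective-lifting induction, and one can cite the Comparison Theorem (\cite{BenI} 2.2.7, 2.4.2) in the form used already in Lemma \ref{tec1}, applied to the augmented complexes, rather than rewriting the induction in full.
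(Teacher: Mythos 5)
Your proof is correct, but it takes a more direct route than the paper. You observe that the hypotheses say precisely that the augmented complexes $\widetilde U_\bullet$ and $\widetilde V_\bullet$ are exact, so that $\widetilde U_\bullet$ is a projective resolution of $R$ and $\widetilde V_\bullet$ is an (arbitrary) resolution of $R$; the classical comparison theorem for module resolutions then hands you a $\Gamma$-chain map over $\mathrm{id}_R$ by the usual degree-by-degree lifting, and the quasi-isomorphism property is automatic since all the homology sits in degree $0$ where the map is the identity on $R$. The paper instead keeps everything at the level of chain complexes: it chooses a projective resolution $Q_\bullet$ of $V_\bullet$ (as a complex), lifts the identity on $R_\bullet$ to a quasi-isomorphism $Q_\bullet\to U_\bullet$, upgrades this to a homotopy equivalence because both are bounded-below complexes of projectives, inverts it, and composes with $Q_\bullet\to V_\bullet$. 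Your argument is more elementary and self-contained — it needs neither the existence of projective resolutions of complexes nor the fact that a quasi-isomorphism between complexes of projectives is invertible up to homotopy — but it exploits the fact that both complexes have homology concentrated in degree $0$, which is exactly what lets the augmented complexes be exact. The paper's detour through $Q_\bullet$ reuses machinery that is set up anyway for Lemma \ref{tec1} and would survive in situations where the homology is not concentrated in a single degree. The only point you should make explicit is that the augmentation $\varepsilon_V\colon V_0\to R$ is the quotient onto $H_0(V_\bullet)\cong R$ (a surjective $\Gamma$-map onto the trivial module), which is what licenses the first lifting step; you do say this, so the argument is complete.
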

\proof There are obvious quasi-isomorphisms $V_\bullet\buildrel\alpha\over\to R_\bullet$ and $U_\bullet\to R_\bullet$
where $R_\bullet$ is the complex with $R$ concentrated in degree 0. Let $Q_\bullet$ be a projective resolution of $V_\bullet$, then there is a quasi-isomorphism
$Q_\bullet\to V_\bullet$ which can be composed with $\alpha$ to give
$$Q_\bullet\to R_\bullet.$$

The Comparison Theorem implies that the identity on $R_\bullet$ can be lifted to a quasi-isomorphism $Q_\bullet\to U_\bullet$. As these are chain complexes of projectives, it is in fact a homotopy equivalence so there is also a homotopy equivalence $U_\bullet\to Q_\bullet$. Therefore there is a quasi-isomorphism  $U_\bullet\to Q_\bullet\to V_\bullet$.\qed

Let $\mathcal P$ be a $\Gamma$-poset.
We denote by ${\mathcal P}_\bullet$  the chain complex of $\Gamma$-modules
associated to the geometrical realization  $|\mathcal P|$.

For the proof of the next result, it is useful to work in the category of local coefficient systems for the poset $\mathcal{H}$, a category which is closely related to that of Bredon modules. A local coefficient system with $R$-coefficients  for $\mathcal{H}$ is a contravariant functor from the category having as objects the transitive $\Gamma$-sets with stabilizers in $\mathcal{H}$ but with morphisms given only by inclusions $T\leq S$ with $S,T\in\mathcal{H}$ to the category of $R$-modules.  Obviously, any Bredon contramodule is also a local coefficient system. The category of local coefficient systems is abelian and contains enough projectives too. Its free objects are
$$S_K(T):=\Bigg\{\begin{aligned}R\text{ if }T\leq K\\
0\text{ otherwise}\\
\end{aligned}$$
Free Bredon contramodules are also free as local coefficient systems, in fact
\begin{equation}\label{freebredon}
P_K^\Gamma=\bigoplus_{x\in\Gamma/K}S_{K^{x^{-1}}}.
\end{equation}

  For  any $H\leq\Gamma$ we put
 $$\mathcal{H}_H=\{H<K:K\in\mathcal{H}\}.$$

\begin{lemma}\label{homot} Let $H\leq \Gamma$ be a  subgroup with $\mathcal{H}_H\neq\emptyset$ and
$C_\bullet$ be a chain complex of free Bredon contramodules based at
$\mathcal{H}_H$ such that
$C_\bullet(K)\twoheadrightarrow R$ is exact for each $K\in\mathcal{H}_H$.
Then there is a $WH$-quasi-isomorphism $\Sigma\widetilde C_\bullet(H)\to\Sigma\widetilde{\mathcal{H}}_{H\bullet}$ and
$$\pd_{WH}\Sigma\widetilde C_\bullet(H)=\pd_{WH}\Sigma\widetilde{\mathcal{H}}_{H\bullet}.$$
\end{lemma}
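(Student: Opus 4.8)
The plan is to reduce everything to a $WH$-quasi-isomorphism between $C_\bullet(H)$ and the chain complex $\mathcal{H}_{H\bullet}$ of the poset $\mathcal{H}_H$: augmenting (both complexes carry an augmentation to $R$, compatibly, once one normalises the one on $C_\bullet$ so that free generators go to $1$) and suspending then yields $\Sigma\widetilde C_\bullet(H)\to\Sigma\widetilde{\mathcal{H}}_{H\bullet}$, and the displayed equality of projective dimensions follows at once from Lemma \ref{tec1}. Observe first that $\mathcal{H}_H$ is genuinely a $WH$-poset, since $H\leq K$ for every $K\in\mathcal{H}_H$ forces $H$ to act trivially by conjugation; and $\mathcal{H}_{H\bullet}$, as well as $C_\bullet(H)$ (on which $WH=N_\Gamma(H)/H$ acts through the self-maps of $\Gamma/H$ in the orbit category), are $WH$-chain complexes. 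The one computation that makes the argument run is: for every chain complex $D_\bullet$ of free Bredon contramodules based at $\mathcal{H}_H$, restriction to local coefficient systems and evaluation at $\Gamma/H$ is naturally, hence $WH$-equivariantly, identified with the colimit over $\mathcal{H}_H$, namely $D_\bullet(H)\cong\operatorname{colim}_{\mathcal{H}_H}\bigl(D_\bullet|_{\mathcal{H}_H}\bigr)$. By \eqref{freebredon} it suffices to check this on a single $P^\Gamma_K$, where on one side $P^\Gamma_K(H)=\bigoplus\{R:\ H\leq {}^xK\}$ and on the other $\operatorname{colim}_{\mathcal{H}_H}S_L=R$ for $L\in\mathcal{H}_H$ and $=0$ otherwise; since a conjugate of $K\in\mathcal{H}_H$ contains $H$ exactly when it again lies in $\mathcal{H}_H$, the two sides agree.

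Now let $B_\bullet\to\underline R$ be the simplicial (bar) resolution of the trivial local coefficient system over $\mathcal{H}_H$, with $B_n=\bigoplus_{K_0<\cdots<K_n}S_{K_0}$ and the usual alternating face differential; it is $WH$-equivariant because functorial in the $WH$-poset $\mathcal{H}_H$, and it is a resolution because evaluated at $K\in\mathcal{H}_H$ it is the augmented chain complex of the order complex of $\{K'\in\mathcal{H}_H:\ K'\geq K\}$, a cone on $K$, hence contractible. Form the first-quadrant $WH$-equivariant double complex $E_{p,q}=\bigoplus_{K_0<\cdots<K_p}C_q(K_p)$ — that is, $B_\bullet$ with local-system coefficients in $C_\bullet|_{\mathcal{H}_H}$ — with the bar differential in the $p$-direction and the internal differential of $C_\bullet$ in the $q$-direction, and compute $H_\ast(\operatorname{Tot}E_{\bullet\bullet})$ by its two filtrations. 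Taking the $q$-homology first: by hypothesis $C_\bullet(K_p)\twoheadrightarrow R$ is exact, naturally in $K_p$, so for fixed $p$ the complex $E_{p,\bullet}$ is quasi-isomorphic to $\bigoplus_{K_0<\cdots<K_p}R=\mathcal{H}_{H,p}$, the surviving differential being the simplicial boundary of the order complex of $\mathcal{H}_H$; this gives a $WH$-quasi-isomorphism $\operatorname{Tot}E_{\bullet\bullet}\to\mathcal{H}_{H\bullet}$. Taking the $p$-homology first: for fixed $q$, $E_{\bullet,q}$ is the bar complex computing $\operatorname{colim}_{\mathcal{H}_H}$ of the free system $C_q|_{\mathcal{H}_H}$, and each summand $S_L$ is $\operatorname{colim}_{\mathcal{H}_H}$-acyclic (a covariant representable, with $\operatorname{colim}_{\mathcal{H}_H}S_L=R$ concentrated in degree $0$), so $E_{\bullet,q}$ is quasi-isomorphic to $\operatorname{colim}_{\mathcal{H}_H}\bigl(C_q|_{\mathcal{H}_H}\bigr)=C_q(H)$ by the previous paragraph; this gives a $WH$-quasi-isomorphism $\operatorname{Tot}E_{\bullet\bullet}\to C_\bullet(H)$. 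Hence $C_\bullet(H)$ and $\mathcal{H}_{H\bullet}$ are $WH$-quasi-isomorphic; concretely, choosing a comparison map $C_\bullet|_{\mathcal{H}_H}\to B_\bullet$ lifting $\operatorname{id}_{\underline R}$ (possible since $C_\bullet|_{\mathcal{H}_H}$ is a complex of projectives and $B_\bullet\to\underline R$ is exact) and applying $\operatorname{colim}_{\mathcal{H}_H}$ yields such a quasi-isomorphism $C_\bullet(H)\to\mathcal{H}_{H\bullet}$ directly. Augmenting and suspending gives $\Sigma\widetilde C_\bullet(H)\to\Sigma\widetilde{\mathcal{H}}_{H\bullet}$, and Lemma \ref{tec1} completes the proof.

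The step requiring the most care is the $WH$-equivariance: one must verify that the conjugation action of $WH$ on $\mathcal{H}_H$ induces compatible actions on $C_\bullet|_{\mathcal{H}_H}$ — via the $\Gamma$-maps $\Gamma/K\to\Gamma/{}^gK$, which for $g\in N_\Gamma(H)$ depend only on $gH$ because $g$ normalises $H\leq K$ — on $B_\bullet$, on $E_{\bullet\bullet}$, and on every colimit and comparison used, so that all the quasi-isomorphisms above are $WH$-equivariant. (Over a general coefficient ring the two collapses of $\operatorname{Tot}E_{\bullet\bullet}$ need not splice into a single equivariant chain map, since the $S_L$ are not $WH$-equivariantly projective; what is really being asserted, and all that Lemma \ref{tec1} uses, is that $C_\bullet(H)$ and $\mathcal{H}_{H\bullet}$ are $WH$-quasi-isomorphic through the zig-zag provided by $\operatorname{Tot}E_{\bullet\bullet}$. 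When $|\mathcal{H}_H|$ is disconnected one should also use the standard summing augmentation $\mathcal{H}_{H,0}\to R$, rather than the unreduced colimit $\bigoplus_{\pi_0}R$, in forming $\widetilde{\mathcal{H}}_{H\bullet}$.)
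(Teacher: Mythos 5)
Your proof is correct, and its two pillars are exactly the paper's: the identification of evaluation at $H$ with $\operatorname{colim}_{\mathcal{H}_H}$ for free Bredon contramodules based at $\mathcal{H}_H$ (checked on generators via (\ref{freebredon})), and the auxiliary complex $B_\bullet$, which is precisely the paper's ${\mathcal{H}}_{(-)\leq\bullet}$, a complex of projective local coefficient systems resolving $\underline R$ because each evaluation is the order complex of a cone. Where you genuinely diverge is in how the comparison between $C_\bullet|_{\mathcal{H}_H}$ and $B_\bullet$ is organised: you package it as a double complex $E_{p,q}$ collapsed two ways into an equivariant zig-zag, precisely because, as you rightly flag, a comparison map built by lifting in the local-coefficient-system category (whose morphisms are only inclusions) carries no automatic $WH$-equivariance. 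The paper avoids this by constructing the comparison $C_\bullet(-)\to{\mathcal{H}}_{(-)\leq\bullet}$ in the category of Bredon contramodules (the analogue of Lemma \ref{tec2} there, using that $C_\bullet$ is Bredon-projective), where the conjugation maps $\Gamma/K\to\Gamma/K^{g}$ are morphisms; the lift is then a Bredon morphism, hence commutes with the $WH$-action by construction, and only afterwards is projectivity in the local-system category invoked to upgrade the quasi-isomorphism to a homotopy equivalence so that $\operatorname{colim}$ preserves it. That extra care buys an honest single $WH$-map $\Sigma\widetilde C_\bullet(H)\to\Sigma\widetilde{\mathcal{H}}_{H\bullet}$, which is what the lemma literally asserts; your zig-zag yields the same projective-dimension equality via Lemma \ref{tec1} and suffices for every later application (one lifts from the projective complex $P_{H\bullet}$ through either leg), but if you want the stated map itself, simply perform your lift $C_\bullet\to B_\bullet$ in the Bredon category rather than the local one and then take colimits, which is the paper's route.
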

\proof Observe that $C_\bullet$ is a free resolution of $\underline{R}$ in the category of Bredon contramodules for the class $\mathcal{H}_H$.
For any $T\in\mathcal{H}_H$, let ${\mathcal{H}}_{T\leq}:=\{K\in\mathcal{H}\mid T\leq K\}$. Then, the map
$${\mathcal{H}}_{(-)\leq_\bullet}:T\mapsto{\mathcal{H}}_{T\leq\bullet}$$
yields a chain complex of Bredon contramodules. Obviously, $|{\mathcal{H}}_{T\leq}|$
is contractible so this chain complex is exact everywhere except of degree 0 where its homology is the trivial object.
Now, by the analogous statement of \ref{tec2} in the category of Bredon contramodules there is a quasi-isomorphism
\begin{equation}\label{quasiiso}C_\bullet(\blah)\to {\mathcal{H}}_{(-)\leq\bullet}.\end{equation}
Obviously, these chain complexes also belong to the category of local coefficient systems. The advantage here is that ${\mathcal{H}}_{(-)\leq\bullet}$ is build from projectives, in fact
$$({\mathcal{H}}_{(-)\leq })_t=\bigoplus\{S_K(-)\mid K\in{\mathcal{H}}_H\text{ such that }K=K_0<\ldots< K_t\text{ for some }K_i\in\mathcal{H}\}.$$

 So we have a quasi-isomorphism between chain complexes of projectives which must be therefore an homotopy equivalence of local coefficient systems. Next, we take the colimit of the diagrams obtained by evaluating both chain complexes at each $T\in\mathcal{H}_H$ and with arrows given by inclusions.
Note that for any $K\in\mathcal{H}_H$,
$$\text{colim}_{T\in\mathcal{H}_H}S_K(T)=R=S_K(H)$$
From this and (\ref{freebredon}) follows that also $$\text{colim}_{T\in\mathcal{H}_H}P^\Gamma_K(T)=P^\Gamma_K(H)$$
As our chain complexes are built from modules of either of these two kinds we deduce that
$$\text{colim}_{T\in\mathcal{H}_H}C_\bullet(T)=C_\bullet(H),$$
$$\text{colim}_{T\in\mathcal{H}_H}{\mathcal{H}}_{T\leq\bullet}={\mathcal{H}}_{H\leq \bullet}={\mathcal{H}}_{H\bullet}.$$

Moreover, (\ref{quasiiso}) induces a map between both diagrams, so we get a map
$$C_\bullet(H)\to {\mathcal{H}}_{H\leq\bullet}=\mathcal{H}_{H\bullet}$$
which is an $R$-homotopy equivalence. We claim that it is also a $WH$-map (but not a $WH$-homotopy equivalence).
 To see it, note that from the obvious conjugation action of $WH$ on $\mathcal{H}_H$, we get an action of $WH$ in both diagrams. Observe that any $gH\in N_\Gamma(H)/H=WH$ yields a $\Gamma$-map $\Gamma/K\to \Gamma/K^{g}$ and since (\ref{quasiiso}) is a Bredon morphism, the map that it induces commutes with the $WH$-action. So when we take the direct limit we do get an $WH$-map.
 Now, the fact that it is an $R$-homotopy equivalence implies that it induces isomorphisms between the homology groups, in other words, it is a $WH$-quasi-isomorphism which extends to a quasi-isomorphism $\Sigma\widetilde C_\bullet(H)\to\Sigma\widetilde{\mathcal{H}}_{H\bullet}$. For the last assertion use \ref{tec1}.
\qed

\begin{remark} The previous result remains true if we take as $C_\bullet$ a resolution build from projective Bredon contramodules for ${\mathcal{H}}$. \end{remark}

Consider a subgroup $H\leq \Gamma$. We can associate to every $WH$-module $U$ a Bredon contramodule  for $\mathcal{H}$ denoted $\text{ind}_\tau U$ via the following formula (see \cite{lueckbook})
$$\text{ind}_\tau U(\blah)=P^\Gamma_H(\blah)\otimes_{WH}U.$$
The notation is due to the fact that $\text{ind}_\tau$ is an induction functor. It is the left adjoint to the restriction functor, which consists just on evaluation at $H$. This implies that $\text{ind}_\tau$ takes projectives to projectives, a fact that also follows by observing that $\text{ind}_\tau RWH=P^\Gamma_H$.
Note that for any $WH$-module $U$,
$$\text{ind}_\tau U(H)=P^\Gamma_H(H)\otimes_{WH}U=RWH\otimes_{WH}U=U.$$
We are going to use this in Theorem \ref{resolution} to construct explicitly free Bredon resolutions for groups having a bound on the $\mathcal{H}$-lengths of their subgroups in $\mathcal{H}$, where the $\mathcal{H}$-length of a group $H$ is the biggest $s$ such that there is a chain
$$1\leq H_0<H_1<\ldots<H_s\leq H$$
of subgroups with all $H_i\in\mathcal{H}$.

\begin{theorem}\label{resolution} Let $\Gamma$ be a group with a bound on the $\mathcal{H}$-lengths of the subgroups in $\mathcal{H}$ and take for any $H\in\mathcal{H}$, a $WH$-free resolution $P_{H\bullet}$ of the chain complex $\Sigma\widetilde{\mathcal{H}}_{H\bullet}$.
Then there is an $\mathcal{H}$-Bredon projective resolution of $\underline R$ which regardless of the connecting maps is
$$\bigoplus_{H\in\mathcal{H}/\Gamma}\text{ind}_\tau P_{H\bullet}=\bigoplus_{H\in\mathcal{H}/\Gamma}P^\Gamma_{H}(-)\otimes_{WH}P_{H\bullet}$$
($\mathcal H/\Gamma$ is a set of representatives of $\Gamma$-orbits in $\mathcal{H}$).
\end{theorem}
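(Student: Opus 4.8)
The plan is to realise $\bigoplus_{H\in\mathcal{H}/\Gamma}\text{ind}_\tau P_{H\bullet}$ as a \emph{free} Bredon resolution of an auxiliary Bredon complex $D_\bullet$ which is itself quasi-isomorphic to $\underline{R}$, the direct-sum decomposition being produced by a finite filtration of $D_\bullet$ indexed by $\mathcal{H}$-length. The first ingredient is a bookkeeping fact. For $H\in\mathcal{H}$ and $T\in\mathcal{H}$ the set $(\Gamma/H)^T$ carries a free right $WH$-action whose orbit set is identified, via $gH\cdot WH\mapsto{}^{g}H$, with the set of subgroups $K$ of $\Gamma$ that are conjugate to $H$ and contain $T$. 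Hence, for every left $RWH$-module (or complex of such) $M$, evaluating $\text{ind}_\tau M$ at $T$ gives a decomposition, natural in $T$,
$$\text{ind}_\tau M(T)=P^\Gamma_H(T)\otimes_{WH}M\cong\bigoplus_{\substack{K={}^{g}H\\ T\le K}}{}^{g}M,$$
where ${}^{g}M$ is $M$ transported along the conjugation isomorphism $WH\xrightarrow{\ \sim\ }WK$ and the transition maps act by the corresponding conjugations; this is just $(\ref{freebredon})$ read off at $T$, and in particular it shows every $P^\Gamma_H(T)$ is $RWH$-free, so $\text{ind}_\tau$ is exact.

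Next I would introduce the target, setting $D_\bullet(T):=\mathcal{H}_{T\leq\bullet}$, the chain complex of $|\mathcal{H}_{T\leq}|$ with $\mathcal{H}_{T\leq}=\{K\in\mathcal{H}\mid T\le K\}$; the transition maps conjugate chains, so $D_\bullet$ is a complex of Bredon contramodules for $\mathcal{H}$ augmented over $\underline{R}$, and — as already noted in the proof of Lemma~\ref{homot} — each $|\mathcal{H}_{T\leq}|$ is contractible (least element $T$), so $D_\bullet$ is a Bredon resolution of $\underline{R}$, though not by projectives: it will only serve as a target. Now, since $H<K$ in $\mathcal{H}$ forces the $\mathcal{H}$-length $l(H)$ to be strictly smaller than $l(K)$, and $l$ is conjugation invariant with values in $\{0,\dots,n\}$ for $n$ the given bound, I would filter $D_\bullet$ by letting $D^{(s)}_\bullet(T)$ be spanned by the chains of $\mathcal{H}_{T\leq}$ whose minimum $K_0$ satisfies $l(K_0)\ge s$. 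Deleting the minimum of a chain strictly raises $l$ of the minimum, while deleting any other vertex leaves it unchanged; hence each $D^{(s)}_\bullet$ is a Bredon subcomplex and $D^{(n)}_\bullet\subseteq\cdots\subseteq D^{(0)}_\bullet=D_\bullet$ is a finite filtration.

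The crux is the associated graded. The complex $\operatorname{gr}^sD_\bullet(T)$ is spanned by the chains $K_0<\cdots<K_m$ of $\mathcal{H}_{T\leq}$ with $l(K_0)=s$, equipped with the alternating sum of the faces that keep $K_0$; sending such a chain to the chain $K_1<\cdots<K_m$ of $\mathcal{H}_{K_0}$ placed in degree $m$ (and the length-$0$ chain $(K_0)$ to the degree-$0$ generator) identifies it with $\bigoplus_{\{K_0\,:\,l(K_0)=s,\ T\le K_0\}}\Sigma\widetilde{\mathcal{H}}_{K_0\bullet}$, the augmentation of the poset complex becoming the bottom differential of the suspension. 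Comparing with the bookkeeping fact applied to $M=\Sigma\widetilde{\mathcal{H}}_{H\bullet}$ should give, naturally in $T$ and as complexes of Bredon contramodules, $\operatorname{gr}^sD_\bullet\cong\bigoplus_{H\in\mathcal{H}/\Gamma,\ l(H)=s}\text{ind}_\tau\Sigma\widetilde{\mathcal{H}}_{H\bullet}$. Since $\text{ind}_\tau$ is exact and $P_{H\bullet}\to\Sigma\widetilde{\mathcal{H}}_{H\bullet}$ is a quasi-isomorphism, $\bigoplus_{l(H)=s}\text{ind}_\tau P_{H\bullet}$ is a free Bredon resolution of $\operatorname{gr}^sD_\bullet$. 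I would then run the horseshoe lemma for chain complexes — in the form built from the Comparison Theorem, cf.\ \cite{BenI} — downward through the finite filtration of $D_\bullet$, obtaining a free Bredon resolution $\widehat{F}_\bullet\to D_\bullet$ whose $s$-th graded piece is $\bigoplus_{l(H)=s}\text{ind}_\tau P_{H\bullet}$; finiteness of the filtration forces $\widehat{F}_\bullet$, regardless of its differential, to equal $\bigoplus_s\bigoplus_{l(H)=s}\text{ind}_\tau P_{H\bullet}=\bigoplus_{H\in\mathcal{H}/\Gamma}\text{ind}_\tau P_{H\bullet}$, and composing $\widehat{F}_\bullet\to D_\bullet\to\underline{R}$ yields the asserted resolution.

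I expect the main obstacle to be the identification $\operatorname{gr}^sD_\bullet\cong\bigoplus_{l(H)=s}\text{ind}_\tau\Sigma\widetilde{\mathcal{H}}_{H\bullet}$: beyond matching the two complexes degreewise, one has to check that, as $T$ ranges over $\mathcal{H}$, the summand of $\operatorname{gr}^sD_\bullet(T)$ attached to a subgroup $K_0$ is glued to the remaining summands by exactly the conjugation maps that occur in $\text{ind}_\tau\Sigma\widetilde{\mathcal{H}}_{H\bullet}$ — a routine but fiddly chase through the identifications in the bookkeeping fact and the definition of $\text{ind}_\tau$. Everything else used above (exactness of $\text{ind}_\tau$, contractibility of $\mathcal{H}_{T\leq}$, and the horseshoe lemma for complexes) is standard or already recorded in the excerpt.
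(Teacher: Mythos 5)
Your argument is correct, and it reorganizes the proof in a genuinely different way from the paper. The paper builds the resolution by a downward induction on $\mathcal{H}$-length: at each stage it invokes Lemma~\ref{homot} to produce a $WH$-quasi-isomorphism $\Sigma\widetilde C_\bullet(H)\to\Sigma\widetilde{\mathcal{H}}_{H\bullet}$ (this is where the passage to local coefficient systems and the colimit argument live), lifts $P_{H\bullet}\to\Sigma\widetilde{\mathcal{H}}_{H\bullet}$ through it, applies the adjunction to get $\text{ind}_\tau P_{H\bullet}\to\Sigma\widetilde C_\bullet$, and attaches a mapping cone. You instead take the complex $\mathcal{H}_{(-)\leq\bullet}$ — which in the paper appears only inside the proof of Lemma~\ref{homot} — as the primary object $D_\bullet$ to be resolved, filter it by the length of the minimal vertex, identify the associated graded with $\bigoplus_{l(H)=s}\text{ind}_\tau\Sigma\widetilde{\mathcal{H}}_{H\bullet}$ via the orbit description of $(\Gamma/H)^T$, and assemble a free resolution by an iterated horseshoe/cone argument. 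The net effect is the same iterated cone indexed by length, but your route bypasses Lemma~\ref{homot} entirely: the comparison with the poset complexes is done once, combinatorially, at the level of graded pieces, rather than re-derived at each inductive step. What this buys is a more transparent statement of \emph{why} the underlying graded module is $\bigoplus_H\text{ind}_\tau P_{H\bullet}$; the cost is that you must justify the horseshoe lemma for quasi-isomorphisms of bounded-below complexes of projectives (which does hold, by realizing the connecting map $P''_\bullet[-1]\to P'_\bullet$ as an honest chain map via the Comparison Theorem and taking its cone) and carry out the naturality check you flag, which is indeed routine: the transition maps of $D_\bullet$ and of $\text{ind}_\tau M$ are both given by conjugation on the indexing subgroups, so the degreewise identification is compatible with them. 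Note also that the paper still needs Lemma~\ref{homot} for Theorem~\ref{dimension} and Remark~\ref{lueck1}, so your shortcut does not make that lemma dispensable in the paper as a whole.
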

\proof
We proceed inductively, constructing on each step a Bredon projective resolution of $\underline R$ for
$\{H\in\mathcal{H}\mid H\text{ has $\mathcal{H}$-length}\geq l\}$.

To do that, assume that the resolution $C_\bullet$ is constructed for $l+1$ (we allow the possibility of $C_\bullet$ being the zero complex to include here the case when $H$ is of maximal length, i.e., the first step of the inductive process). Let $H\in\mathcal{H}$ of length $l$ and note that  $C_\bullet$ is exact when
evaluating at each $K\in\mathcal{H}_H$. Essentially, the process below consists in adding free contramodules based at subgroups of length exactly $l$.
 By Lemma \ref{homot} there is a $WH$-quasi-isomorphism $\Sigma\widetilde C_{\bullet}(H)\to\Sigma\widetilde{\mathcal{H}}_{H\bullet}$. From the Comparison Theorem follows that the quasi-iso $P_{H\bullet}\to\Sigma\widetilde{\mathcal{H}}_{H\bullet}$ can be lifted to a new quasi-iso
$$P_{H\bullet}\to\Sigma\widetilde C_{\bullet}(H).$$
The adjoint isomorphism for chain complexes
$$\text{Hom}_{WH}(P_{H\bullet},\Sigma\widetilde C_\bullet(H))\cong\text{Hom}_\mathcal{H}(\text{ind}_\tau P_{H\bullet},\Sigma\widetilde C_\bullet)$$
yields a chain map
$$\text{ind}_\tau P_{H\bullet}\to\Sigma\widetilde C_\bullet.$$
The same can be done with any representative of the (possibly infinitely many) conjugacy classes of subgroups of length $l$. Summing them up we get a new chain map
$$b:\bigoplus_{\{H\in\mathcal{H}\mid l(H)=l\}/\Gamma}\text{ind}_\tau P_{H\bullet}\to\Sigma\widetilde C_\bullet.$$

Now let $Q_\bullet$ be the chain complex of Bredon contramodules
such that its augmented complex is  $\widetilde
Q_\bullet=\text{cone}b_{\bullet+1},$ (see \cite{Weibel}, Section 1.5) i.e.,
$$Q_n=\bigoplus_{\{H\in\mathcal{H}\mid l(H)=l\}/\Gamma}\text{ind}_\tau P_{Hn}\oplus C_{n}.$$
For any $K\in\mathcal{H}_H$ with $l(K)>l$, $Q_\bullet(K)=C_\bullet(K)$ and
$Q_\bullet(H)\twoheadrightarrow R$
is also exact, so $Q_\bullet$ is the desired resolution.
\qed

\begin{theorem}\label{dimension} If there is a bound on the $\mathcal{H}$-lengths of subgroups in $\mathcal{H}$,then
$${\cd}_{\mathcal{H}}\Gamma=\max_{H\in\mathcal{H}}\pd_{WH}\Sigma\widetilde{\mathcal{H}}_{H\bullet}.$$
\end{theorem}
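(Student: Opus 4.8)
The plan is to establish the two inequalities separately. The inequality ``$\le$'' will come directly from the explicit resolution of Theorem~\ref{resolution}; the inequality ``$\ge$'' is the substantive part, and I would get it by producing, for each $H_0\in\mathcal H$, a Bredon coefficient system that detects $\pd_{WH_0}\Sigma\widetilde{\mathcal H}_{H_0\bullet}$ as a nonvanishing $\text{Ext}$-group over the Bredon category.

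For ``$\le$'': I would run Theorem~\ref{resolution} with each $P_{H\bullet}$ chosen to be a $WH$-projective resolution of $\Sigma\widetilde{\mathcal H}_{H\bullet}$ of minimal length $\pd_{WH}\Sigma\widetilde{\mathcal H}_{H\bullet}$. (Its construction only invokes the Comparison Theorem and mapping cones, so it applies verbatim to projective resolutions, and $\text{ind}_\tau$ still sends projectives to projectives.) Since $\text{ind}_\tau P_{Hn}=0$ precisely when $P_{Hn}=0$, the resulting resolution $\bigoplus_{H\in\mathcal H/\Gamma}\text{ind}_\tau P_{H\bullet}$ of $\underline R$ has length $\sup_{H\in\mathcal H}\pd_{WH}\Sigma\widetilde{\mathcal H}_{H\bullet}$; if that supremum is infinite, so is $\cd_{\mathcal H}\Gamma$ by the lower bound below, so throughout we may read the ``$\max$'' of the statement as a ``$\sup$''.

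For ``$\ge$'', fix $H_0\in\mathcal H$, put $d=\pd_{WH_0}\Sigma\widetilde{\mathcal H}_{H_0\bullet}$, and pick a $WH_0$-chain complex $D_\bullet$ with $\text{Ext}^d_{WH_0}(\Sigma\widetilde{\mathcal H}_{H_0\bullet},D_\bullet)\ne0$. I would first note that, since $\mathcal H$-lengths are bounded, no member of $\mathcal H$ is properly subconjugate to itself: an inclusion $K^g\subsetneq K$ would give an infinite strictly decreasing chain $K\supsetneq K^g\supsetneq K^{g^2}\supsetneq\cdots$ in $\mathcal H$. Hence the full subcategory of the orbit category on the objects $\Gamma/K$ with $K$ conjugate to $H_0$ is a connected groupoid with vertex group $WH_0$, and there is a well-defined chain complex $M_\bullet$ of Bredon contramodules that equals $D_\bullet$ (with its $WH_0$-structure) at $H_0$ and vanishes at every $K$ not conjugate to $H_0$ — the ``extension by zero'' of $D_\bullet$. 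Now I would take the explicit resolution $Q_\bullet=\bigoplus_{H\in\mathcal H/\Gamma}\text{ind}_\tau P_{H\bullet}$ of Theorem~\ref{resolution} and compute $\text{Ext}^*_{\mathcal H}(\underline R,M_\bullet)$ as the cohomology of $\text{Hom}_{\mathcal H}(Q_\bullet,M_\bullet)$. Because $\text{ind}_\tau$ is left adjoint to evaluation at the relevant subgroup, $\text{Hom}_{\mathcal H}(\text{ind}_\tau P_{H\bullet},M_\bullet)=\text{Hom}_{WH}(P_{H\bullet},M_\bullet(H))$, which vanishes for every $H$ not conjugate to $H_0$. Thus all summands of $Q_\bullet$ except $\text{ind}_\tau P_{H_0\bullet}$ are annihilated by $\text{Hom}_{\mathcal H}(-,M_\bullet)$, and so is every differential component of $Q_\bullet$ having such a summand as source or as target (it composes to a map into, or out of, a zero $\text{Hom}$-group); what remains is exactly $\text{Hom}_{WH_0}(P_{H_0\bullet},D_\bullet)$. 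Therefore
\[\text{Ext}^n_{\mathcal H}(\underline R,M_\bullet)\;\cong\;\text{Ext}^n_{WH_0}(\Sigma\widetilde{\mathcal H}_{H_0\bullet},D_\bullet)\qquad\text{for all }n,\]
since $P_{H_0\bullet}$ is a $WH_0$-projective resolution of $\Sigma\widetilde{\mathcal H}_{H_0\bullet}$. In particular $\text{Ext}^d_{\mathcal H}(\underline R,M_\bullet)\ne0$; as $\text{Ext}^n_{\mathcal H}(\underline R,-)$ vanishes in degrees above $\cd_{\mathcal H}\Gamma$ even when paired against chain complexes of coefficients, this yields $\cd_{\mathcal H}\Gamma\ge d$. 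Letting $H_0$ range over $\mathcal H$ finishes the proof.

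The main obstacle is precisely the ``$\ge$'' direction: converting the abstract invariant $\pd_{WH_0}\Sigma\widetilde{\mathcal H}_{H_0\bullet}$ into a concrete nonvanishing Bredon $\text{Ext}$-group. The two points demanding care are the construction of the ``extension by zero'' coefficient system (where the boundedness of $\mathcal H$-lengths is used, to rule out proper self-subconjugacy) and the observation that the unspecified connecting maps of the resolution of Theorem~\ref{resolution} are invisible once one pairs against a system supported on a single conjugacy class. The degenerate case $\mathcal H_{H_0}=\emptyset$ requires no separate treatment: there $\Sigma\widetilde{\mathcal H}_{H_0\bullet}=R$ concentrated in degree $0$, and the formula specializes to $\cd_{\mathcal H}\Gamma\ge\cd WH_0$.
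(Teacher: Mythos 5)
Your proof is correct, and the upper bound is obtained exactly as in the paper (both read it off from Theorem \ref{resolution} applied to minimal-length resolutions $P_{H\bullet}$). For the lower bound, however, you take a genuinely different route. The paper starts from a free Bredon resolution $P_\bullet$ of $\underline R$ of length $\cd_{\mathcal H}\Gamma$, extracts the subcomplex $C_\bullet$ of summands based at subgroups properly containing a conjugate of $H$, evaluates at $H$, and uses the short exact sequence $\Sigma\widetilde C_\bullet(H)\rightarrowtail\Sigma\widetilde P_\bullet(H)\twoheadrightarrow\Sigma F_\bullet$, with $\Sigma F_\bullet$ free over $WH$ of length at most $\cd_{\mathcal H}\Gamma+1$, together with Lemma \ref{homot} to bound $\pd_{WH}\Sigma\widetilde{\mathcal H}_{H\bullet}$. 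You instead build the extension-by-zero coefficient system $M_\bullet$ supported on the conjugacy class of $H_0$ and show that pairing the explicit resolution of Theorem \ref{resolution} against it collapses to $\Hom_{WH_0}(P_{H_0\bullet},D_\bullet)$, giving $\operatorname{Ext}^n_{\mathcal H}(\underline R,M_\bullet)\cong\operatorname{Ext}^n_{WH_0}(\Sigma\widetilde{\mathcal H}_{H_0\bullet},D_\bullet)$. Both arguments are sound. Yours has the merit of isolating a reusable statement (Bredon cohomology with coefficients concentrated on one conjugacy class is an Ext-group over the Weyl group, a fact implicit in the literature on Bredon modules) and of making explicit where the bounded $\mathcal H$-length hypothesis enters: it excludes proper self-subconjugacy, so that the extension by zero is functorial and $\End(\Gamma/H_0)=WH_0$. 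The two points you flag as delicate are indeed the load-bearing ones, and your justification that the cross-components of the differential of $Q_\bullet$ die under $\Hom(-,M_\bullet)$ (because either source or target Hom-group vanishes by the adjunction) is correct; note this uses the triangularity of the mapping-cone construction with respect to the length filtration only implicitly, since the vanishing holds for any component touching a summand based outside the class of $H_0$. The paper's version is shorter and needs no auxiliary coefficient system, at the price of leaning again on Lemma \ref{homot} and on the splitting of the evaluation $P_\bullet(H)$ of a free Bredon module into the part coming from larger subgroups plus free $WH$-modules.
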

\proof By Theorem \ref{resolution}, $\cd_\mathcal{H}\Gamma\leq\max_{H\in\mathcal{H}}\pd_{WH}\Sigma\widetilde{\mathcal{H}}_{H\bullet}$ so we only have to prove that for any $H\in\mathcal{H}$, $\pd_{WH}\Sigma\widetilde{\mathcal{H}}_{H\bullet}\leq{\cd}_{\mathcal{H}}\Gamma$ and we may assume $\cd_{\mathcal{H}}\Gamma<\infty$.  Basically by the same argument as in the ordinary case one can prove that there is a free resolution $P_\bullet$ of $\underline R$ of length $\cd_{\mathcal{H}}\Gamma$. Consider the subcomplex $C_\bullet$ of $P_\bullet$ formed by all those free modules based at subgroups $K$ with $H<_\Gamma K$. Evaluating at $H$ we get the $WH$-short exact sequence
 $$\Sigma\widetilde C_\bullet(H)\rightarrowtail\Sigma\widetilde P_\bullet(H)\twoheadrightarrow \Sigma F_\bullet$$
 where $\Sigma F_\bullet$ is a chain complex consisting of free $WH$-modules and has length bounded by ${\cd}_{\mathcal{H}}\Gamma+1$. The fact that
$\Sigma\widetilde P_\bullet(H)$ is exact implies that its Ext functors vanish so the long exact sequence yields
$$\pd_{WH}\Sigma\widetilde C_\bullet(H)=\pd_{WH}\Sigma F_\bullet-1=\text{length}\Sigma F_\bullet-1\leq\cd_{\mathcal{H}}\Gamma$$
and the result follows by Lemma \ref{homot}.
\qed

\begin{remark} Theorems \ref{resolution} and \ref{dimension} can be seen as an algebraic version of
\cite{connkoz} Theorem III but for arbitrary families of subgroups and coefficients rings; moreover, we do not assume the group is virtually torsion free. See also \cite{symonds} Proposition 8.6, where a similar result is proven in the case of a finite group and the family of its $p$-subgroups.
\end{remark}

\begin{remark}\label{lueck1} An inductive process as in Theorem \ref{resolution} but with $\mathcal{H}_{1(WH)}=\{H<K\leq N_\Gamma(H)\}$ instead of $\mathcal{H}_H$ does not work because of the following: Assume we have constructed $C_\bullet$, a Bredon free resolution for ${\mathcal{H}_H}_\bullet$ and let $\sigma_{WH}C_\bullet(-)$ be the subcomplex consisting of those free summands of $C_\bullet$ based at subgroups $K$ with $H<_\Gamma N_G(H)\cap K$. It is a consequence of Lemma \ref{homot} that $\pd_{WH}\Sigma\widetilde{\mathcal{H}}_{1(WH)\bullet}=\pd_{WH}\Sigma\sigma_{WH}\widetilde C_\bullet(H).$ Moreover, $C_\bullet(H)$ and $\sigma_{WH}C_\bullet(H)$ only differ on free $WH$-modules. We may compare their projective dimensions after augmenting and suspending using the short exact sequence
$$\Sigma\sigma_{WH}\widetilde C_\bullet(H)\rightarrowtail\Sigma\widetilde C_\bullet(H)\twoheadrightarrow \Sigma F_\bullet$$
and we get
\begin{equation}\label{pdsigma}\begin{aligned}
\pd_{WH}\Sigma\widetilde C_\bullet(H)\leq\max\{\pd_{WH}\Sigma\sigma_{WH}\widetilde C_\bullet(H),\pd_{WH}\Sigma F_\bullet\}\leq\\
\max\{\pd_{WH}\Sigma\sigma_{WH}\widetilde C_\bullet(H),\text{length}C_\bullet+1\}.
\end{aligned}\end{equation}
This essentially means that it might be the case that to make $C_\bullet$ exact at $H$, we have to add free contramodules based at $H$ at degrees bigger than $\pd_{WH}\Sigma\widetilde{{\mathcal{H}}}_{1(WH)\bullet}$. For a group with a subgroup $H$ such that $\pd_{WH}\Sigma\widetilde{{\mathcal{H}}}_{1(WH)\bullet}<\pd_{WH}\Sigma\widetilde{\mathcal{H}}_{H\bullet}=\pd_{WH}\Sigma\widetilde C_\bullet(H)$ see Example \ref{learynucinkis}.
\end{remark}



 From now on, unless otherwise stated we will concentrate in the case when the family is $\FF=\mathcal{F}in$, i.e. the family of finite subgroups of $\Gamma$ and we denote $\underline{\cd}\Gamma=\cd_\FF\Gamma$.
Note that Theorem \ref{dimension} implies that if $\Gamma$ is assumed to have bounded orders of finite subgroups,  $\underline{\cd}\Gamma<\infty$ if and only if $\max_{H\in\mathcal{H}}\pd_{WH}\Sigma\widetilde{\FF}_{H\bullet}<\infty$. Unfortunately, as remarked before, this last invariant is very difficult to compute. In recent years, several authors have tried to characterize algebraically groups $\Gamma$ with $\underline{\cd}\Gamma<\infty$. A group invariant of importance for this purpose is  $\pd_\Gamma B(\Gamma)$ where $B(\Gamma)$ is the module of bounded functions on $\Gamma$ first defined by Kropholler and Talelli in \cite{krophollertalelli} as
$$B(\Gamma):=\{f:\Gamma\to R\mid f(\Gamma)\text{ takes finitely many distinct values}\}.$$
The main properties that make this module interesting in this context are that it contains a copy of the trivial module $R$ and that it is free when restricted to every finite subgroup of $\Gamma$.
Using the first of these properties it is easy to prove (see \cite{conch} Lemma 3.4) that for modules $M$ of finite projective dimension $\pd_\Gamma M=\pd_\Gamma M\otimes B(\Gamma)$ (here and below by $\otimes$ we mean $\otimes_R$).
Moreover, whenever we have subgroups $H\leq K$, then
$\pd_{WK}B(WK)\leq\pd_{WH}B(WH)$ (\cite{conch} Proposition 3.7). Recall also that for groups $\Gamma$ of finite virtual cohomological dimension, $\pd_\Gamma B(\Gamma)=\vcd\Gamma$.

 \begin{lemma}\label{induction} Assume that $\Gamma$ has a bound on the orders of its finite subgroups. Then if $\underline{\cd}\Gamma<\infty,$
$$\pd_{WH}\Sigma\widetilde{\FF}_{H\bullet}\leq\max\{\pd_{WH}B(WH),\max_{H<K}\pd_{WK}\Sigma\widetilde\FF_{K\bullet}+1\}.$$
\end{lemma}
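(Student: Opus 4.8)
Set $m:=\pd_{WH}B(WH)$ and $d:=\max\{\pd_{WK}\Sigma\widetilde{\FF}_{K\bullet}\mid K\text{ finite},\ H<_\Gamma K\}$; since conjugate subgroups contribute the same term, $d$ is exactly the quantity $\max_{H<K}\pd_{WK}\Sigma\widetilde{\FF}_{K\bullet}$ of the statement, and we must prove $\pd_{WH}\Sigma\widetilde{\FF}_{H\bullet}\le\max\{m,d+1\}$. Because $\Gamma$ has bounded orders of finite subgroups it has bounded $\FF$-lengths, so Theorem \ref{dimension} applies and gives $\pd_{WH}\Sigma\widetilde{\FF}_{H\bullet}\le\underline{\cd}\Gamma<\infty$; this finiteness enters only at the very end. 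The idea is to read off this projective dimension from a carefully chosen Bredon resolution and then tensor with $B(WH)$, whose defining property (freeness on restriction to finite subgroups) will trivialise the contributions coming from subgroups properly above $H$.

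By Theorem \ref{resolution} choose an $\FF$-Bredon free resolution $P_\bullet$ of $\underline R$ of the form $P_\bullet=\bigoplus_{K\in\FF/\Gamma}\text{ind}_\tau P_{K\bullet}$, with $P_{K\bullet}$ a $WK$-free resolution of $\Sigma\widetilde{\FF}_{K\bullet}$ of minimal length $\pd_{WK}\Sigma\widetilde{\FF}_{K\bullet}$. Let $\sigma_H P_\bullet\subseteq P_\bullet$ be the subcomplex formed by the free summands based at finite subgroups $K$ with $H<_\Gamma K$: this is a subcomplex, since a nonzero component $P^\Gamma_K\to P^\Gamma_{K'}$ of a Bredon differential forces $K\le_\Gamma K'$ and then $H<_\Gamma K'$, and for the same reason $\sigma_H P_\bullet=\bigoplus_{K\in\FF/\Gamma,\,H<_\Gamma K}\text{ind}_\tau P_{K\bullet}$ as a graded object. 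Evaluating at $H$,
$$\sigma_H P_\bullet(H)=\bigoplus_{H<_\Gamma K}P^\Gamma_K(H)\otimes_{WK}P_{K\bullet},$$
a complex of permutation $WH$-modules concentrated in degrees $0,\dots,d$. Exactly as in the proof of Theorem \ref{dimension}, Lemma \ref{homot} provides a $WH$-quasi-isomorphism $\Sigma\widetilde{\sigma_H P_\bullet}(H)\to\Sigma\widetilde{\FF}_{H\bullet}$, so $\pd_{WH}\Sigma\widetilde{\sigma_H P_\bullet}(H)=\pd_{WH}\Sigma\widetilde{\FF}_{H\bullet}$.

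Now tensor over $R$ with $B(WH)$. Fix a finite $K$ with $H<_\Gamma K$. Every left $WH$-orbit of $(\Gamma/K)^H$ is the orbit of a coset $gK$ with $H\le L:=gKg^{-1}$, and its $WH$-stabiliser is $N_L(H)/H$, a \emph{finite} subgroup of $WH$. Since the restriction of $B(WH)$ to any finite subgroup is free, the left $WH$-module $M_K:=P^\Gamma_K(H)\otimes_R B(WH)$ (diagonal action) is \emph{free}, and it retains the right $WK$-action of $P^\Gamma_K(H)$ on the first factor. By the projection formula $\bigl(P^\Gamma_K(H)\otimes_{WK}P_{K\bullet}\bigr)\otimes_R B(WH)\cong M_K\otimes_{WK}P_{K\bullet}$; as $P_{K\bullet}$ consists of free $WK$-modules and $M_K$ is left-$WH$-free, this is a complex of free $WH$-modules of length $\le\pd_{WK}\Sigma\widetilde{\FF}_{K\bullet}\le d$. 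Summing over $K$, $\sigma_H P_\bullet(H)\otimes_R B(WH)$ is a complex of free $WH$-modules concentrated in degrees $0,\dots,d$, whatever the connecting differentials are.

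Finally, the augmentation $\sigma_H P_\bullet(H)\to R$, tensored with $B(WH)$ (which is $R$-free, hence flat) and suspended, gives a short exact sequence of $WH$-chain complexes
$$0\to B(WH)[0]\to\Sigma\widetilde{\sigma_H P_\bullet}(H)\otimes_R B(WH)\to\Sigma\bigl(\sigma_H P_\bullet(H)\otimes_R B(WH)\bigr)\to0,$$
in which the left term has projective dimension $m$ and the right term, being the suspension of a bounded complex of free modules supported in degrees $0,\dots,d$, has projective dimension $\le d+1$; the associated long exact sequence of $\operatorname{Ext}$ then yields $\pd_{WH}\bigl(\Sigma\widetilde{\sigma_H P_\bullet}(H)\otimes_R B(WH)\bigr)\le\max\{m,d+1\}$. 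Combining this with the chain-complex version of \cite{conch} Lemma 3.4 — namely $\pd_{WH}C_\bullet=\pd_{WH}(C_\bullet\otimes_R B(WH))$ for any $WH$-complex $C_\bullet$ of finite projective dimension, which follows from the module case by d\'evissage along a finite free resolution — and the finiteness noted at the outset, we get
$$\pd_{WH}\Sigma\widetilde{\FF}_{H\bullet}=\pd_{WH}\Sigma\widetilde{\sigma_H P_\bullet}(H)=\pd_{WH}\bigl(\Sigma\widetilde{\sigma_H P_\bullet}(H)\otimes_R B(WH)\bigr)\le\max\{m,d+1\},$$
as required. The crux of the argument — and the step I expect to require the most care — is the freeness of $M_K$: one must correctly identify the $WH$-isotropy of $(\Gamma/K)^H$ (this is precisely where boundedness of the orders of finite subgroups is used, to ensure finite stabilisers) and verify that the right $WK$-module structure does not obstruct the projection-formula rewriting; a minor additional point is the promotion of \cite{conch} Lemma 3.4 from modules to chain complexes.
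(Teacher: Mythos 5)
Your proof is correct, and although it shares the paper's overall skeleton --- pass to $\pd_{WH}(\Sigma\widetilde{\FF}_{H\bullet}\otimes B(WH))$ via the chain-complex version of \cite{conch} Lemma 3.4, split off the degree-zero copy of $B(WH)$ by a short exact sequence, and bound the quotient by $d+1$ using the long exact sequence of Ext --- it implements the crucial bound on the quotient term in a genuinely different way. The paper stays with the simplicial chain complex $\FF_{H\bullet}$ itself and filters it by the bottom subgroup $K$ of each simplex; each graded piece is identified with $(\Sigma\widetilde{\FF}_{K\bullet})\uparrow_{W_KH}^{WH}$ for $W_KH=(N_\Gamma(K)\cap N_\Gamma(H))/H$, and the bound $\pd_{WK}\Sigma\widetilde{\FF}_{K\bullet}$ is then extracted through a chain of comparisons ($B(WH)$ is a $W_KH$-direct summand of $B(W_KH)$, tensoring with $B(W_KH)$ preserves finite projective dimension, $W_KH$ has finite index in $WK$). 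You instead replace $\Sigma\widetilde{\FF}_{H\bullet}$ by the evaluation at $H$ of the subcomplex $\sigma_HP_\bullet$ of the explicit resolution of Theorem \ref{resolution} (legitimate by Lemma \ref{homot}, since $\sigma_HP_\bullet(K)=P_\bullet(K)$ for $K\in\FF_H$, exactly as in the proof of Theorem \ref{dimension}), and observe that each term $P^\Gamma_K(H)\otimes_R B(WH)$ is already a free $WH$-module because the $WH$-stabilizers of $(\Gamma/K)^H$ are the finite groups $N_L(H)/H$ and $R[WH/S]\otimes_R B(WH)\cong (B(WH)\downarrow_S)\uparrow^{WH}$ is free for $S$ finite; the quotient is then literally a bounded complex of free modules in degrees $1,\dots,d+1$. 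Your route makes the role of the freeness of $B(WH)$ on finite subgroups completely transparent and avoids the bookkeeping with $W_KH$ and $B(W_KH)$, at the cost of first invoking Theorem \ref{resolution} with minimal-length local resolutions $P_{K\bullet}$ (whose existence as \emph{free} resolutions of minimal length is the same routine fact used in the proof of Theorem \ref{dimension}, and requires $d<\infty$, which you correctly extract from $\underline{\cd}\Gamma<\infty$). Two small corrections of emphasis: the finiteness of the stabilizers $N_L(H)/H$ follows simply from $L$ being finite, so the bound on the orders of finite subgroups is not used there but rather to make Theorems \ref{resolution} and \ref{dimension} applicable; and the projection-formula step is unproblematic because $M_K\otimes_{WK}(RWK)^{(n)}\cong M_K^{(n)}$ is $WH$-free whenever $M_K$ is, so the right $WK$-structure causes no obstruction.
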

\proof Note first that our assumption on $\underline{\cd}\Gamma$ and Theorem \ref{resolution} imply that for any finite $H$, $\pd_{WH}\Sigma\widetilde{\FF}_{H\bullet}<\infty$.
By the analogous of \cite{conch} Lemma 3.4 for chain complexes we have $\pd_{WH}\Sigma\widetilde{\FF}_{H\bullet}=\pd_{WH}\Sigma\widetilde{\FF}_{H\bullet}\otimes B(WH).$
Now, consider the short exact sequence of $WH$-chain complexes
$$R_\bullet\rightarrowtail\Sigma\widetilde{\FF}_{H\bullet}\twoheadrightarrow\Sigma{\FF}_{H\bullet}$$
and tensor it with the module $B(WH)$
\begin{equation}\label{seq2}
B(WH)_\bullet\rightarrowtail\Sigma\widetilde{\FF}_{H\bullet}\otimes B(WH)\twoheadrightarrow\Sigma{\FF}_{H\bullet}\otimes B(WH).\end{equation}

Using the long exact sequence of Ext functors we get
$$\pd_{WH}\Sigma\widetilde{\FF}_{H\bullet}\otimes B(WH)\leq\text{max}\{\pd_{WH}B(WH),\pd_{WH}{\FF}_{H\bullet}\otimes B(WH)+1\}.$$

So to finish the proof we only have to show that $$\pd_{WH}{\FF}_{H\bullet}\otimes B(WH)\leq\max_{H<K}\pd_{WK}\Sigma\widetilde\FF_{K\bullet}.$$ Observe first that the simplices of $|\FF_H|$ are of the form $K=K_0<\ldots<K_t$ with $H<K$. We may use this to filter $\FF_{H\bullet}$ in terms of the size of the bottom subgroup (i.e., the $K$) of each simplex. Each term of this filtration is a direct sum of the $WH$-chain complexes of simplices having a group $WH$-conjugated to a fixed $K$ at the bottom, and this chain complex is precisely
$(\Sigma\widetilde{\FF}_{K\bullet})\uparrow_{W_KH}^{WH}$
where we denote $W_KH=(N_\Gamma(K)\cap N_\Gamma(H))/H$.  Therefore we deduce

$$\pd_{WH}{\FF}_{H\bullet}\otimes B(WH)\leq\max_{K\in\FF_H/WH}\pd_{WH}(\Sigma\widetilde{\FF}_{K\bullet})\uparrow_{W_KH}^{WH}\otimes B(WH)$$

Obviously $
\pd_{WH}(\Sigma\widetilde\FF_{K\bullet})\uparrow_{W_KH}^{WH}\otimes B(WH)=\pd_{W_KH}\Sigma\widetilde\FF_{K\bullet}\otimes B(WH)$.
Now, $B(WH)$ is, as $W_KH$-module, a direct summand of $B(W_KH)$ so
$$\pd_{W_KH}\Sigma\widetilde\FF_{K\bullet}\otimes B(WH)\leq \pd_{W_KH}\Sigma\widetilde\FF_{K\bullet}\otimes B(W_KH).$$
Moreover, $C_\Gamma(K)\leq N_\Gamma(H)\cap N_\Gamma(K)\leq N_\Gamma(K)$ is of finite index and therefore
$\pd_{W_KH}\Sigma\widetilde\FF_{K\bullet}=\pd_{WK}\Sigma\widetilde\FF_{K\bullet}<\infty$ and
$$\pd_{W_KH}\Sigma\widetilde\FF_{K\bullet}\otimes B(W_KH)=\pd_{W_KH}\Sigma\widetilde\FF_{K\bullet}=\pd_{WK}\Sigma\widetilde\FF_{K\bullet}.$$

\qed

\begin{example}\label{learynucinkis} Let $\Gamma=K\ltimes H_L$ be the group constructed in \cite{LN} Section 9 Example 4, where $K=A_5$ is the alternating group of degree 5, and $H_L$ the Bestvina-Brady group associated to a certain flag complex $L$ of dimension 2. The non-trivial subgroups of $K$ are, up to conjugation $C\cong C_2$, a 2-Sylow $P\cong C_2\times C_2$, $N_K(P)\cong A_4$, a 3-Sylow $Q\cong C_3$, $N_K(Q)\cong S_3$, a 5-Sylow $R\cong C_5$ and $N_K(R)\cong D_{10}$. Using the explicit description of $L$ in \cite{LN} one easily sees that all the fixed points subcomplexes $L^T$ when $1<T<K$ are contractible and have dimension less or equal than 1 (more precisely, $L^P$, $L^{N_K(P)}$, $L^R$, $L^{N_K(R)}$, $L^{N_K(Q)}$ are points and $L^C$, $L^Q$ have dimension 1), whereas $L^K=\emptyset$. Then Theorem 3 of \cite{LN} implies that for any such $T$ there is only one conjugacy class of complements of $H_L$ in $TH_L$ and that $WT\cong (N_K(T)/T)\ltimes H_{L^T}$. Moreover, for any $S\leq WT$ finite, $(L^T)^S$ is contractible (observe that $K$ is simple, so it is not contained in any $N_\Gamma(T)$). Thus by \cite{LN} Theorem 5
$$\underline{\cd}WT=\vcd WT=\cd H_{L^T}=\dim L^T<2=\dim L$$
and using Theorem \ref{dimension} we get $\pd_{WT}\Sigma\widetilde{\FF}_{1(WT)\bullet}\leq\underline{\cd}WT<2$ (we are using the same notation as in Remark \ref{lueck1} but for $\FF$).
On the other hand, for those $A_5\cong K_1\leq\Gamma$, we have $WK_1=1$ so $\pd_{WK_1}\Sigma\widetilde\FF_{K_1\bullet}=0$. All this means that for any $1\neq T\leq\Gamma$ finite, $\pd_{WT}\Sigma\widetilde{\FF}_{1(WT)\bullet}<2$. Assume now that $\pd_{WT}\Sigma\widetilde\FF_{T\bullet}\leq\pd_{WT}\Sigma\widetilde{\FF}_{1(WT)\bullet}$ for all such $T$. Then
Lemma \ref{induction} would imply
$$\pd_\Gamma\Sigma\widetilde\FF_{1\bullet}\leq\max\{\vcd\Gamma,2\}=2$$
so by Theorem \ref{dimension} we deduce $\underline{\cd}\Gamma\leq 2$ which contradicts \cite{LN} Theorem 6.

\end{example}

\begin{remark}\label{resfinsubgroups} Let $H\leq \Gamma$ be a finite subgroup. Assume $\Gamma$ has bounded lengths of finite subgroups. Then one can check that for all $1\neq K/H\leq WH$ finite
$$\pd_{K/H}\Sigma\widetilde{\FF}_{H\bullet}<\infty.$$
By \cite{corkro} Theorem C, if $\Gamma\in H\FF$ (the class of hierarchically decomposable groups, see \cite{corkro}) and $\pd_\Gamma B(\Gamma)<\infty$, then every module which has finite projective dimension when restricted to every finite subgroup has finite projective dimension with respect to the whole group $\Gamma$. This implies that $\pd_{WH}\Sigma\widetilde{\FF}_{H\bullet}<\infty$.
\end{remark}

\begin{theorem}\label{condition} Let $\Gamma$ be a group with a bound on the orders of its finite subgroups and with
$\underline{\cd}\Gamma<\infty$. Assume also that there is an order reversing integer valued function $l:\FF\to\Z$ such that
for each $H\leq \Gamma$ finite $\pd_{WH}B(WH)\leq l(H)$ and
\begin{itemize}
\item[i)] Either $l(K)<l(H)$ for any $H<K$

\item[ii)] or $|\FF_H|\simeq\ast$.
\end{itemize}
Then $\underline{\cd}\Gamma\leq l(1).$\end{theorem}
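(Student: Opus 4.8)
The plan is to read everything off Theorem~\ref{dimension}, which identifies $\underline{\cd}\Gamma$ with $\max_{H\in\FF}\pd_{WH}\Sigma\widetilde{\FF}_{H\bullet}$. Since $l$ is order reversing and $1\le H$ for every finite $H$, one has $l(H)\le l(1)$, so it suffices to prove $\pd_{WH}\Sigma\widetilde{\FF}_{H\bullet}\le l(H)$ for every finite $H\le\Gamma$: Theorem~\ref{dimension} then gives $\underline{\cd}\Gamma\le\max_{H\in\FF}l(H)=l(1)$. Before starting I would record that $\underline{\cd}\Gamma<\infty$, again by Theorem~\ref{dimension}, makes every $\pd_{WH}\Sigma\widetilde{\FF}_{H\bullet}$ finite, which is precisely what permits use of Lemma~\ref{induction} and of the identity $\pd_{WH}M=\pd_{WH}(M\otimes B(WH))$ from \cite{conch}.

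I would prove the inequality $\pd_{WH}\Sigma\widetilde{\FF}_{H\bullet}\le l(H)$ by induction running \emph{upward} in $\FF$: since the orders of the finite subgroups of $\Gamma$ are bounded there are no infinite ascending chains of finite subgroups, so it is legitimate to establish the bound for $H$ while assuming $\pd_{WK}\Sigma\widetilde{\FF}_{K\bullet}\le l(K)$ for every finite $K$ with $H<K$. The two clauses of the hypothesis on $H$ give two cases. If $|\FF_H|\simeq\ast$, then $\widetilde{\FF}_{H\bullet}$ computes the (vanishing) reduced homology of a contractible space, hence is acyclic, so $\Sigma\widetilde{\FF}_{H\bullet}$ is an exact complex and $\pd_{WH}\Sigma\widetilde{\FF}_{H\bullet}=-\infty\le l(H)$. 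If instead $l(K)<l(H)$ for every $H<K$, I would invoke Lemma~\ref{induction},
$$\pd_{WH}\Sigma\widetilde{\FF}_{H\bullet}\le\max\bigl\{\pd_{WH}B(WH),\ \max_{H<K}\bigl(\pd_{WK}\Sigma\widetilde{\FF}_{K\bullet}+1\bigr)\bigr\};$$
the first term is $\le l(H)$ by hypothesis, and for the second the inductive hypothesis gives $\pd_{WK}\Sigma\widetilde{\FF}_{K\bullet}\le l(K)$, while $l$ being integer valued turns $l(K)<l(H)$ into $l(K)+1\le l(H)$, so $\pd_{WK}\Sigma\widetilde{\FF}_{K\bullet}+1\le l(H)$ too. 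Either way $\pd_{WH}\Sigma\widetilde{\FF}_{H\bullet}\le l(H)$, which closes the induction.

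The base case of the induction is a maximal finite subgroup $H$, where $\FF_H=\emptyset$: clause (ii) cannot hold, clause (i) holds vacuously, the inner maximum in Lemma~\ref{induction} is over the empty set, and the bound degenerates to $\pd_{WH}\Sigma\widetilde{\FF}_{H\bullet}=\pd_{WH}R\le\pd_{WH}B(WH)\le l(H)$ — here $\Sigma\widetilde{\FF}_{H\bullet}$ is just the trivial module $R$ in degree $0$, $\pd_{WH}R<\infty$ by the first paragraph, and $\pd_{WH}R=\pd_{WH}(R\otimes B(WH))=\pd_{WH}B(WH)$. I expect the only points needing real care to be this behaviour at the top of $\FF$ together with the convention that exact (and zero) complexes have projective dimension $-\infty$; the actual content is the trivial arithmetic observation that clause (i) supplies exactly the single extra unit absorbed by the ``$+1$'' in Lemma~\ref{induction}, while clause (ii) removes $\Sigma\widetilde{\FF}_{H\bullet}$ from the running maximum altogether.
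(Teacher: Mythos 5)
Your proof is correct and follows essentially the same route as the paper: a well-founded induction from maximal finite subgroups downward, with case (ii) handled by exactness of $\Sigma\widetilde{\FF}_{H\bullet}$ and case (i) by Lemma~\ref{induction} together with the arithmetic $l(K)+1\le l(H)$, concluding via Theorem~\ref{dimension}. Your extra care with the base case and the convention $\pd=-\infty$ for exact complexes only makes explicit what the paper leaves implicit.
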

\proof Fix a finite subgroup $H$. By an inductive argument we may assume that for any $H<K$,
$\pd_{WK}\Sigma\widetilde{\FF}_{K\bullet}\leq l(K)$ (note that in particular this holds if $K$ is maximal).
We claim that $\pd_{WH}\Sigma\widetilde{\FF}_{H\bullet}\leq l(H)$. This is obvious in case ii), so we assume i).
 We have then
 $$\max_{H<K}\pd_{WK}\Sigma\widetilde{\FF}_{K\bullet}+1\leq \max_{H<K}l(K)+1\leq l(H)$$
 Using Lemma \ref{induction} and the hypothesis we get the claim so the result follows by Theorem \ref{dimension}.
\qed

\section{Virtually-(torsion free solvable) groups.}\label{solvable}

\noindent We begin this section by showing that virtually-(torsion free solvable) groups $\Gamma$ satisfy the hypothesis of Theorem \ref{condition} with a function $l$ closely related to the Hirsch rank $h\Gamma$. The first results will be useful for inductive arguments.

\begin{lemma}\label{boundedsection} Any bounded section of an abelian group of finite Pr\"ufer rank is finite.
\end{lemma}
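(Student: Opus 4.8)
The plan is to reduce everything to the statement that an abelian group of finite Pr\"ufer rank and bounded exponent is finite. Recall that a section of $A$ means a quotient $B=H/K$ with $K\normal H\leq A$, and that ``bounded'' here means of bounded exponent, i.e.\ there is an integer $n\geq 1$ with $nb=0$ for every $b\in B$. I would first observe that sections of abelian groups do not increase the Pr\"ufer rank: if $A$ has Pr\"ufer rank $\leq r$ then so does every subgroup (a finitely generated subgroup of a subgroup is a finitely generated subgroup of $A$) and so does every quotient (lift a finite generating set of a finitely generated subgroup of $A/N$ to a finitely generated subgroup of $A$, replace it by an $r$-element generating set, and push back down). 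Hence the section $B$ is an abelian group of Pr\"ufer rank $\leq r$ and of exponent dividing $n$, and it suffices to prove such a $B$ is finite.

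The key point I would use is a uniform bound on finitely generated subgroups: any finitely generated subgroup $C\leq B$ is generated by at most $r$ elements, each killed by $n$, so $C$ is a homomorphic image of $(\Z/n\Z)^{r}$; in particular $C$ is finite and $|C|\leq n^{r}$. Therefore the set of orders of finitely generated subgroups of $B$ is a bounded set of positive integers, so one can choose a finitely generated subgroup $F\leq B$ of \emph{maximal} order.

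Finally, for an arbitrary $b\in B$ the subgroup $\langle F,b\rangle$ is finitely generated, hence finite, and it contains $F$; maximality of $|F|$ forces $\langle F,b\rangle=F$, so $b\in F$. Thus $B=F$ is finite, as required. I do not expect a genuine obstacle in this argument; the only points needing (routine) care are the passage of Pr\"ufer rank to a section and the uniform bound $n^{r}$, which is exactly what makes ``a subgroup of maximal order'' available. (One could alternatively invoke Pr\"ufer's structure theorem for bounded abelian groups and count cyclic summands, but that seems unnecessary.)
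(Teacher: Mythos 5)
Your proof is correct, but it takes a different route from the paper's. The paper makes the same first reduction (a section of a group of finite Pr\"ufer rank again has finite Pr\"ufer rank, so it suffices to treat a bounded abelian group $B$ of rank $\leq r$ itself), but then argues via primary decomposition: it reduces to the case where every element has order a power of a fixed prime $p$, inducts on the exponent using the kernel and image of $b\mapsto b^p$, and finally observes that an elementary abelian $p$-group of finite Pr\"ufer rank is a finite-dimensional $\F_p$-vector space. You instead extract the uniform bound $|C|\leq n^{r}$ on all finitely generated subgroups $C\leq B$ (each being a quotient of $(\Z/n\Z)^{r}$) and then run a maximality argument: a finitely generated subgroup $F$ of maximal order must absorb every element $b$, since $\langle F,b\rangle$ is again finitely generated, hence finite, and contains $F$. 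Both arguments are elementary and complete; yours avoids the reduction to a single prime and the induction on exponent entirely, at the price of the (routine) observation that the set of orders of finitely generated subgroups is bounded and so attains a maximum. The paper's version is slightly more structural (it exhibits $B$ as built from finitely many finite-dimensional $\F_p$-vector spaces), which is closer in spirit to how the lemma is later used, but nothing in the rest of the paper depends on that extra information, so your proof is an acceptable substitute.
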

\begin{proof} As any section of an abelian group of finite Pr\"ufer rank has also finite Pr\"ufer rank, we only have to prove that if $A$ is bounded, abelian and of finite Pr\"ufer rank then it is finite. 
   As $A$ is bounded, we may assume that all its elements have order a power of a fixed prime $p$. By considering the Kernel and Image of the group homomorphism
 $\beta:A\to A$ given by $\beta(a)=a^p$, one easily sees that we may assume that all the elements of $A$ have order $p$. But then $A$ must be  a finite dimensional vector space over $\F_p$.
\end{proof}

\begin{lemma}\label{inductionsolv} Let $\Gamma$ be virtually-(torsion free solvable) of finite Hirsch rank, $A\normal\Gamma$ abelian and $\bar \Gamma =\Gamma /A$. For any $L\leq\Gamma$ finite
$$hC_\Gamma(L)=hC_A(L)+hC_{\bar\Gamma}(L).$$
Moreover, whenever $H<L$, $|C_\Gamma (H):C_\Gamma (L)|$ is finite if and only if $|C_{\bar \Gamma }(H):C_{\bar \Gamma }(L)|$ and $|C_A(H):C_A(L)|$ are both finite.
\end{lemma}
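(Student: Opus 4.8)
The plan is to compare centralizers through the extension $1\to A\to\Gamma\to\bar\Gamma\to1$, the decisive point being the finiteness of a first cohomology group. Throughout, write $\bar L$ for the image of $L$ in $\bar\Gamma$, so that $C_{\bar\Gamma}(L)$ abbreviates $C_{\bar\Gamma}(\bar L)$, and let $\overline{C_\Gamma(L)}$ denote the image of $C_\Gamma(L)$ in $\bar\Gamma$. Conjugation gives a short exact sequence
$$1\to C_A(L)\to C_\Gamma(L)\to\overline{C_\Gamma(L)}\to1,$$
and $\overline{C_\Gamma(L)}\leq C_{\bar\Gamma}(\bar L)$. Since the Hirsch rank is additive on such extensions of virtually-(torsion free solvable) groups of finite Hirsch rank, the first formula reduces to showing that $\overline{C_\Gamma(L)}$ has finite index in $C_{\bar\Gamma}(\bar L)$.

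To establish this I would, for each $g\in\Gamma$ with $\bar g\in C_{\bar\Gamma}(\bar L)$, consider the function $c_g\colon L\to A$ defined by $g^{-1}\ell g=\ell\,c_g(\ell)$. A direct check shows $c_g$ is a $1$-cocycle for the conjugation action of $L$ on $A$, that replacing $g$ by $ga$ with $a\in A$ alters $c_g$ by a coboundary, and that $g^{-1}\ell g$ can be made trivial for all $\ell$ after such a replacement precisely when $[c_g]=0$ in $H^1(L,A)$. Thus $\bar g\mapsto[c_g]$ is a well-defined map $\phi\colon C_{\bar\Gamma}(\bar L)\to H^1(L,A)$ with $\phi^{-1}(0)=\overline{C_\Gamma(L)}$; a further computation identifies $\phi$ as a crossed homomorphism for the natural action of $C_{\bar\Gamma}(\bar L)$ on $H^1(L,A)$, which factors through a finite quotient since $\operatorname{Aut}(H^1(L,A))$ is finite. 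Now $H^1(L,A)$ is a section of the group $\operatorname{Map}(L,A)\cong A^{|L|}$, hence has finite Pr\"ufer rank because $A$, being an abelian subgroup of $\Gamma$, does; on the other hand it is annihilated by $|L|$. So Lemma~\ref{boundedsection} forces $H^1(L,A)$ to be finite. Passing to the finite-index subgroup of $C_{\bar\Gamma}(\bar L)$ acting trivially on $H^1(L,A)$, on which $\phi$ restricts to an honest homomorphism into a finite group, one concludes $[C_{\bar\Gamma}(\bar L):\overline{C_\Gamma(L)}]<\infty$. This finiteness of $H^1(L,A)$ is the crux of the argument; the rest is bookkeeping.

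For the second statement, recall $C_\Gamma(L)\leq C_\Gamma(H)$, $C_A(L)\leq C_A(H)$ and $C_{\bar\Gamma}(\bar L)\leq C_{\bar\Gamma}(\bar H)$. Setting $X=C_\Gamma(L)$, $Y=C_\Gamma(H)$ and inserting the intermediate subgroup $X(Y\cap A)=X\,C_A(H)$, one checks $Y\cap XA=X(Y\cap A)$, and the elementary index identities $[Y:X(Y\cap A)]=[YA:XA]$ and $[X(Y\cap A):X]=[Y\cap A:X\cap A]$ yield
$$[C_\Gamma(H):C_\Gamma(L)]=[\overline{C_\Gamma(H)}:\overline{C_\Gamma(L)}]\cdot[C_A(H):C_A(L)].$$
Hence $|C_\Gamma(H):C_\Gamma(L)|$ is finite if and only if $|C_A(H):C_A(L)|$ and $|\overline{C_\Gamma(H)}:\overline{C_\Gamma(L)}|$ both are. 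Finally, by the first part $\overline{C_\Gamma(L)}$ and $\overline{C_\Gamma(H)}$ have finite index in $C_{\bar\Gamma}(\bar L)$ and $C_{\bar\Gamma}(\bar H)$ respectively, so $|\overline{C_\Gamma(H)}:\overline{C_\Gamma(L)}|<\infty$ if and only if $|C_{\bar\Gamma}(\bar H):C_{\bar\Gamma}(\bar L)|<\infty$, which completes the proof.
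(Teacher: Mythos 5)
Your proof is correct and follows essentially the same route as the paper: the finiteness of $H^1(L,A)$ via Lemma \ref{boundedsection} applied to a bounded section of a finite Pr\"ufer rank group, together with the injection of $C_{\bar\Gamma}(\bar L)/\overline{C_\Gamma(L)}$ into $H^1(L,A)$ (the paper delegates these two points to Lemmas 2.2 and 3.10 of the cited Mart\'inez-P\'erez--Nucinkis paper). You merely spell out the cocycle construction and the index bookkeeping that the paper leaves as ``both assertions follow easily.''
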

\begin{proof} Note first that $A$ is virtually-(torsion free abelian) of finite Hirsch rank and therefore it has finite Pr\"ufer rank. Using Lemma \ref{boundedsection} and with the same proof as in \cite{martineznucinkis} Lemma 2.2 we deduce that $H^1(H,A)$ is finite and by the proof of \cite{martineznucinkis} Lemma 3.10 there is an injection
$$C_{\bar \Gamma }(H)/AC_\Gamma (H)\rightarrowtail H^1(H,A).$$
From this both assertions follow easily.
\end{proof}

\begin{theorem}\label{solv} Let $\Gamma$ be virtually-(torsion free solvable) and
$H<L\leq\Gamma$ finite subgroups. Then the following are equivalent:
\begin{enumerate}

\item $h C_\Gamma(H)=h C_\Gamma(L)$,

\item $|C_\Gamma(H):C_\Gamma(L)|<\infty$,

\end{enumerate}
\end{theorem}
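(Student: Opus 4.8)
The plan is to dispose of $(2)\Rightarrow(1)$ at once and to prove $(1)\Rightarrow(2)$ by induction on the Hirsch rank $h\Gamma$ (finite throughout this section), with Lemma~\ref{inductionsolv} supplying the inductive step. For $(2)\Rightarrow(1)$: since $H\subseteq L$, any element centralising $L$ centralises $H$, so $C_\Gamma(L)\leq C_\Gamma(H)$; a finite-index inclusion of groups preserves the Hirsch rank, which is exactly $(1)$.

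For $(1)\Rightarrow(2)$, if $h\Gamma=0$ then a torsion-free solvable finite-index normal subgroup of $\Gamma$ must be trivial (its last nontrivial derived term would be a nontrivial torsion-free abelian group, of positive Hirsch rank), so $\Gamma$ is finite and $(2)$ is automatic. If $h\Gamma\geq 1$, choose a torsion-free solvable $N\normal\Gamma$ of finite index; then $N\neq 1$, and its last nontrivial derived subgroup $A:=N^{(c-1)}$ is a nontrivial torsion-free abelian subgroup, characteristic in $N$ and hence normal in $\Gamma$, with $hA\geq 1$. As in \cite{martineznucinkis}, $\bar\Gamma:=\Gamma/A$ is again virtually-(torsion free solvable), of Hirsch rank $h\Gamma-hA<h\Gamma$; and since $A$ is torsion-free, $L\cap A=1$, so $H$ and $L$ embed in $\bar\Gamma$ and still satisfy $H<L$ there. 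Applying Lemma~\ref{inductionsolv}, the additivity $hC_\Gamma(K)=hC_A(K)+hC_{\bar\Gamma}(K)$ for $K\in\{H,L\}$, together with the obvious inclusions $C_A(L)\leq C_A(H)$ and $C_{\bar\Gamma}(L)\leq C_{\bar\Gamma}(H)$ and the hypothesis $hC_\Gamma(H)=hC_\Gamma(L)$, forces $hC_A(H)=hC_A(L)$ and $hC_{\bar\Gamma}(H)=hC_{\bar\Gamma}(L)$. The induction hypothesis applied to $\bar\Gamma$ then gives $|C_{\bar\Gamma}(H):C_{\bar\Gamma}(L)|<\infty$, and by the \emph{moreover} clause of Lemma~\ref{inductionsolv} it remains only to handle the abelian group $A$.

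For the abelian case the argument is purely rational and in fact yields equality, not merely finite index. Writing the conjugation action of the finite group $L$ on $A$ additively, one has $C_A(K)=A^K=A\cap(A\otimes_{\Z}\Q)^K$ for $K\in\{H,L\}$, since $A$ is torsion-free of finite rank; hence $hC_A(K)=\dim_{\Q}(A\otimes\Q)^K$. As $H\leq L$ we have $(A\otimes\Q)^L\subseteq(A\otimes\Q)^H$, so equality of these (finite) dimensions forces $(A\otimes\Q)^L=(A\otimes\Q)^H$, and intersecting with $A$ gives $C_A(H)=C_A(L)$. This closes the induction.

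The step most deserving of care is the structural bookkeeping rather than any computation: one must know that $\Gamma/A$ remains virtually-(torsion free solvable) of finite Hirsch rank at every stage, and, crucially, that by selecting a \emph{fresh} torsion-free solvable finite-index normal subgroup $N$ (and its last derived term $A$) at each step of the induction, every abelian layer $A$ stays torsion-free — this is what makes the identities $C_A(K)=A\cap(A\otimes\Q)^K$ in the previous paragraph valid as stated. All of the genuinely hard work is already packaged in Lemma~\ref{inductionsolv} (in particular the finiteness of $H^1(H,A)$ underlying it, cf.\ Lemma~\ref{boundedsection}), so once the reduction is set up correctly the remaining arguments are short.
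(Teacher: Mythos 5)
Your implication (ii)$\Rightarrow$(i), your use of Lemma \ref{inductionsolv} to split the problem into an abelian layer $A$ and the quotient $\bar\Gamma=\Gamma/A$, and your treatment of the torsion-free abelian case are all sound. The abelian case is where your route genuinely differs from the paper's: you tensor with $\Q$ and compare the nested fixed subspaces $(A\otimes\Q)^L\subseteq(A\otimes\Q)^H$ of equal dimension, whereas the paper works integrally with the averaging element $e=\sum_i t_i$ over the cosets of $H$ in $L$, using torsion-freeness of $A$ to split off and kill $\operatorname{Ann}_B(e)$. Both arguments yield the same stronger conclusion $C_A(H)=C_A(L)$, and yours is arguably the more transparent of the two.

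The gap is in the reduction step --- precisely the ``structural bookkeeping'' you single out, but you have misidentified where the danger lies. Taking $A=N^{(c-1)}$ to be the last nontrivial derived term of a torsion-free solvable finite-index normal subgroup $N$ does make $A$ torsion-free abelian and normal in $\Gamma$ (that was never in doubt, since $A\leq N$), but it does \emph{not} guarantee that $\bar\Gamma=\Gamma/A$ is again virtually-(torsion free solvable), and you need that in order to apply the induction hypothesis to $\bar\Gamma$. Concretely, let $M=\left(\begin{smallmatrix}1&1\\0&1\end{smallmatrix}\right)$ act on $B=\Q\times\Z\leq\Q^2$ and set $N=B\rtimes_M\Z$: this is torsion-free, metabelian, of Hirsch rank $3$, its last nontrivial derived term is $N'=\Z\times 0$, and $N/N'\cong(\Q/\Z)\times\Z^2$. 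Since $\Q/\Z$ is divisible, every finite-index subgroup of $N/N'$ contains it, so $N/N'$ has no torsion-free subgroup of finite index; no alternative choice of $N$ repairs this. The paper sidesteps the problem by invoking Lennox--Robinson Theorem 5.2.5: $\Gamma$ has a finite-index characteristic subgroup $G$ with a characteristic series of finite length \emph{all} of whose factors are torsion-free abelian. Quotienting by the bottom term $G_1$ of that series leaves $G/G_1$ poly-(torsion-free abelian), hence torsion-free solvable and of finite index in $\Gamma/G_1$, and one inducts on the length of the series. With $A=G_1$ in place of $A=N^{(c-1)}$, the rest of your argument goes through as written.
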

\begin{proof} 
Obviously, (ii) implies (i). So we assume (i) and let $Ht_1,\ldots,Ht_s$ be the left classes of $H$ in $L$, thus $s=|L:H|$. By a result of Robinson (see \cite{LennoxRobinson} Theorem 5.2.5) there is a finite index characteristic subgroup $G$ of $\Gamma$ with a characteristic series of finite length whose factors are torsion-free abelian.

Assume first that the group $A:=G$ is torsion free abelian. Let $e:=\sum_i t_i$ and $B:=C_A(H)$. The group $B$ might not be $L$-invariant but $Be\subseteq B_1:=C_A(L)\subseteq B$.
Then
$$Bs=Be+B(s-e)$$
and $B(s-e)\subseteq B_2=:\text{Ann}_B(e)$. Moreover the fact that $B$ is torsion free implies $B_1\cap B_2=0$. Now from the hypothesis $hB_1=hB$ we deduce $hB_2=0$ and again the fact that $B$ is torsion free forces to $B_2=0$. So we have $bs\in B_1$ for any $b\in B=C_A(H)$. But this means that for each $l\in L$, $b(l-1)s=0$ and as $B$ is torsion free we see that $b\in B_1=C_A(L)$.
For the general case use induction and Lemma \ref{inductionsolv}.



\end{proof}

\begin{lemma}\label{keypoint} Let $\Gamma$ be a group with $G\normal\Gamma$ torsion free. Assume that for some $g\in G$ and $L\leq H<\Gamma$ both finite, $L^g\leq H$. Then $L=L^g$. As a consequence,  $L\leq H^x$ for $x\in G$ implies $x\in C_G(L)$.
\end{lemma}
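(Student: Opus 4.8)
The plan rests on the single elementary fact that a torsion-free group has no non-trivial finite subgroup, so here $G\cap H=1$; after that everything is a one-line commutator computation.

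To prove $L=L^{g}$ I would fix $\ell\in L$ and consider $c:=\ell^{-1}(g^{-1}\ell g)$. Writing $c=(\ell^{-1}g^{-1}\ell)g$ exhibits $c$ as a product of two elements of $G$ --- the first because $g^{-1}\in G$ and $G\normal\Gamma$, the second trivially --- so $c\in G$; on the other hand $\ell\in L\leq H$ and $g^{-1}\ell g\in L^{g}\leq H$, so $c\in H$. Hence $c\in G\cap H=1$, i.e. $g^{-1}\ell g=\ell$. Since $\ell\in L$ was arbitrary this actually gives the stronger statement $g\in C_{\Gamma}(L)$, so in particular $L^{g}=L$. (Alternatively one could pass to $\bar\Gamma=\Gamma/G$, where the quotient map is injective on the finite subgroup $H$ while sending $L^{g}$ to $L^{\bar g}=L$ since $\bar g=1$, and then compare preimages inside $H$; I would keep the commutator version because it also supplies the centralizer statement needed next.)

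For the consequence, assuming $L\leq H^{x}$ with $x\in G$, conjugation gives $xLx^{-1}\leq H$, that is $L^{x^{-1}}\leq H$; together with the standing $L\leq H$ and $x^{-1}\in G$ this is exactly the hypothesis of the first part applied to $g=x^{-1}$, which therefore forces $x^{-1}$, hence $x$, into $C_{\Gamma}(L)$, i.e. $x\in C_{G}(L)$. I do not expect any genuine obstacle here: the only thing to spot is that normality of $G$ puts the commutator $\ell^{-1}(g^{-1}\ell g)$ into $G$ while $L,L^{g}\leq H$ puts it into $H$, whereupon $G\cap H=1$ closes the argument. (The degenerate case $H=\Gamma$ would force $\Gamma$, and hence $G$, to be finite and therefore trivial, making the statement vacuous.)
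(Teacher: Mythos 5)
Your proof is correct and rests on the same key fact as the paper's: a finite subgroup $H$ meets the torsion-free normal subgroup $G$ trivially, so the projection $\Gamma\to\Gamma/G$ is injective on $H$; your commutator computation is just the explicit elementwise form of this, and it conveniently yields the stronger statement $g\in C_\Gamma(L)$ that the second assertion actually requires. The parenthetical alternative you mention (comparing preimages under the isomorphism $H\cong HG/G$) is precisely the argument the paper gives.
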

\begin{proof} The fact that the natural projection $\pi:H\to HG/G$ is a group isomorphism implies that whenever $L_1\leq H$ is such that $LG=L_1G$, then $L=L_1$. In particular, observe that $L^gG=LG$.
\end{proof}

\begin{proposition}\label{contractible} Let $\Gamma$ be virtually-(torsion free solvable) and $H\leq\Gamma$ finite. If there is some $H<T$ finite with $|C_\Gamma(H):C_\Gamma(T)|<\infty$, then $|\FF_H|\simeq\ast$.
\end{proposition}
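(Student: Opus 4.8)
The plan is to prove that $|\FF_H|$ is contractible by exhibiting a \emph{conical} contraction of the poset $\FF_H$ with cone point $T$. I shall use the standard fact that if $\phi,\psi\colon\mathcal P\to\mathcal P$ are order-preserving self-maps of a poset with $\phi(x)\le\psi(x)$ for every $x$, then $|\phi|\simeq|\psi|$. Thus it suffices to produce an order-preserving $\phi\colon\FF_H\to\FF_H$ with $K\le\phi(K)$ and $T\le\phi(K)$ for all $K\in\FF_H$: then $\mathrm{id}_{|\FF_H|}\simeq|\phi|\simeq(\text{constant map at }T)$, so $|\FF_H|\simeq\ast$ (and $\FF_H\ne\emptyset$, since $T\in\FF_H$ by hypothesis). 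The obvious choice is $\phi(K):=\langle K,T\rangle$, which is order-preserving and contains both $K$ and $T$, and, as $K\supsetneq H$, contains $H$ properly; the only point needing justification is that $\langle K,T\rangle$ is again \emph{finite}. So the proposition reduces to the claim that $\langle K,T\rangle$ is finite for every finite $K$ with $H<K$.

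To prove this I would first invoke Theorem~\ref{solv} to replace the hypothesis $|C_\Gamma(H):C_\Gamma(T)|<\infty$ by the equivalent equality $hC_\Gamma(H)=hC_\Gamma(T)$, and then fix a torsion free solvable normal subgroup $G\normal\Gamma$ of finite index, so that $L\cap G=1$ for every finite $L\le\Gamma$. The key input from $G$ is the consequence of Lemma~\ref{keypoint}: an element of $G$ that normalizes a finite subgroup $L$ already centralizes $L$ (apply that consequence with $L$ in place of $H$). I also record the elementary remark that, $A$ being torsion free, any finite subgroup of a semidirect product $A\rtimes H$ that contains $H$ must equal $H$; hence for any torsion free abelian $A\normal\Gamma$ the images $\bar H,\bar K,\bar T$ in $\Gamma/A$ still satisfy $\bar H<\bar K$ and $\bar H<\bar T$ (with $\bar K\cong K$, $\bar T\cong T$, $\bar H\cong H$).

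Now I would induct on the derived length of $G$ (equivalently on $h\Gamma$), the base case $G=1$ being trivial as $\Gamma$ is then finite. For the step, let $A$ be the last nontrivial term of the derived series of $G$ — torsion free abelian and normal in $\Gamma$ — and pass to $\bar\Gamma=\Gamma/A$. By Lemma~\ref{inductionsolv}, $hC_\Gamma(H)=hC_A(H)+hC_{\bar\Gamma}(\bar H)$ and $hC_\Gamma(T)=hC_A(T)+hC_{\bar\Gamma}(\bar T)$; since $T\supseteq H$ forces $hC_A(T)\le hC_A(H)$ and $hC_{\bar\Gamma}(\bar T)\le hC_{\bar\Gamma}(\bar H)$, the equality $hC_\Gamma(H)=hC_\Gamma(T)$ forces $hC_A(H)=hC_A(T)$ and $hC_{\bar\Gamma}(\bar H)=hC_{\bar\Gamma}(\bar T)$. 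Applying Theorem~\ref{solv} inside $\bar\Gamma$, the second equality gives $|C_{\bar\Gamma}(\bar H):C_{\bar\Gamma}(\bar T)|<\infty$, so the inductive hypothesis yields that $\langle\bar K,\bar T\rangle=\langle K,T\rangle A/A$ is finite. It therefore only remains to prove that $\langle K,T\rangle\cap A=1$, which together with the previous sentence forces $\langle K,T\rangle$ finite; here one uses that $\langle K,T\rangle\cap A$ is a $\langle K,T\rangle$-invariant torsion free abelian subgroup of $A$, together with the remaining equality $hC_A(H)=hC_A(T)$ and Lemma~\ref{keypoint}, to show that the abelian ``bottom'' $A$ contributes nothing to $\langle K,T\rangle$.

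The main obstacle is exactly this last step, $\langle K,T\rangle\cap A=1$. It is the only place where the solvability hypothesis is genuinely needed — for a general virtually torsion free $\Gamma$ both the claim and the proposition fail — and it cannot be bypassed by a soft homotopy-theoretic trick: it requires controlling the interaction of the finite subgroups $K$ and $T$ with the torsion free abelian layer $A$, which is precisely the role of Lemmas~\ref{inductionsolv} and~\ref{keypoint} and Theorem~\ref{solv}.
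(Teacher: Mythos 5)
Your reduction is sound and matches the paper's: $\FF_H$ is directed (hence contractible) once you know $\langle K,T\rangle$ is finite for every $K\in\FF_H$, and the induction down to an abelian layer via Lemma \ref{inductionsolv} and Theorem \ref{solv} is essentially the paper's general step. But the proof has a genuine gap exactly where you flag it: the claim $\langle K,T\rangle\cap A=1$ in the abelian base case is never proved, and it is not a technicality — it is the entire content of the proposition. Note that "$\langle K,T\rangle\cap A=1$" is literally equivalent to "$\langle K,T\rangle$ is finite", so you have restated the goal rather than reduced it; and the phenomenon you must rule out is real (e.g.\ $D_\infty=C_2\ast C_2$ is an infinite group generated by two finite subgroups with $\langle K,T\rangle\cap\Z\neq 1$ — there the hypothesis fails because $hC_\Z(1)\neq hC_\Z(C_2)$, which shows the centralizer condition must enter in an essential, quantitative way). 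Saying that one "uses Lemma \ref{keypoint} and $hC_A(H)=hC_A(T)$" does not supply that mechanism.

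What the paper actually does in the base case is a cohomological splitting argument that your outline omits entirely. With $S=\langle TA,KA\rangle$ and $F=S/A$ finite, the class of $1\to A\to S\to F\to 1$ in $H^2(F,A)$ has finite order, so $S$ embeds into $F\ltimes A$ and then into $F\ltimes V$ with $V=A\otimes\Q$. Since $H^1(F',V)=0$ for every finite $F'$, all complements of $V$ in any intermediate group are $V$-conjugate, so after conjugating one may take $K\leq F$ and $T^a\leq F$ for some $a\in V$. Then $H\leq F$ and $H^a\leq F$, and Lemma \ref{keypoint} gives $a\in C_V(H)$; the hypothesis (via $hC_A(H)=hC_A(T)$, hence $C_V(H)=C_V(T)$) gives $a\in C_V(T)$, so $T=T^a\leq F$ and $\langle K,T\rangle\leq F$ is finite. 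This passage to $\Q$-coefficients and the uniqueness of complements up to conjugacy is the step that cannot be bypassed, and without it your argument does not close. A secondary, fixable issue: you take $A$ to be the last term of the derived series of $G$, but then $G/A$ need not be torsion free, so $\bar\Gamma$ may fail the hypotheses of the proposition and of Theorem \ref{solv} in your inductive step; the paper avoids this by using Robinson's theorem (\cite{LennoxRobinson} 5.2.5) to choose a characteristic series of $G$ with torsion-free abelian factors, whose bottom term $A$ has poly-(torsion-free abelian), hence torsion free, quotient $G/A$.
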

\begin{proof} We claim that for any $L\in\FF_H$,  $<T,L>$ is finite. This will mean that the poset $\FF_H$ is directed thus contractible.

Let $G$ be as in the proof of Theorem \ref{solv} and assume first that $A=G$ is torsion free abelian. Let $S=<TA,LA>$, obviously $F:=S/A$ is finite and we have a group extension
$$1\rightarrow A\rightarrow S\rightarrow F\rightarrow 1$$
which corresponds to a class $\alpha\in H^2(F,A)$. As $\alpha$ has finite order, say $l$, the map $A\to A$ consisting on multiplying by $l$ can be extended to an embedding $S\to F\ltimes A$, which can be composed with the embedding $F\ltimes A\to F\ltimes V=:S_1$ with $V=A\otimes\Q$. So we may assume $H,T,L\leq S_1$ and $C_{V}(T)=C_{V}(H)$
(note that $C_{V}(K)=C_A(K)\otimes\Q$ for $K=L,H$). If we prove that $L,T$ generate a finite subgroup of $S_1$, it follows that they also generate a finite subgroup of $S$. This means that we may assume that our original group is $\Gamma=F\ltimes A$ with $A=\Q^n$ and $\Gamma/A=<TA,LA>/A$. As the cohomology of a finite group with coefficients in a $\Q$-module vanishes, for any group between $A$ and $\Gamma$ the corresponding extension splits and there is only one conjugacy class of subgroups that gives the splitting. In particular, $F$ contains $A$-conjugates of $L$ and $T$. By changing to some $A$-conjugated subgroup of $F$ if necessary, we may assume that $L\leq F$ and $T^a\leq F$ for some $a\in A$. But then $H,H^a\leq F$ so by Lemma \ref{keypoint}, $a\in C_A(H)=C_A(T)$ thus $L,T\leq F$.

For the general case, argue by induction on the Hirsch length of $G$.
Take $A\normal G$ abelian and $\Gamma$-invariant and let $\bar G=G/A$. By Lemma \ref{inductionsolv} we have the same hypothesis for the group $\Gamma/A$ with respect to the finite subgroups $\bar H=HA/A$, $\bar T=TA/A$ and $\bar L=LA/A$. So we may assume by induction that the group $S/A=<\bar T,\bar L>=<T,L>A/A$ is finite. Therefore $L,H,T\leq S$ which is virtually abelian and again by Lemma \ref{inductionsolv} we have the same hypothesis so by induction (or by the virtually abelian case) the claim follows.

\end{proof}

\noindent The results proven so far imply

\begin{corollary}\label{dimension2} Let $\Gamma$ be virtually-(torsion free solvable).
Then $\Gamma$ satisfies the hypothesis of Theorem \ref{condition} with $l(H)=hWH$ if $\Gamma$ is of type $\FP_\infty$ and $l(H)=hWH+1$ in other case.
In particular
$$\underline{\cd}G=\vcd G.$$
\end{corollary}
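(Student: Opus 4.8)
The plan is to verify that $\Gamma$ satisfies the hypotheses of Theorem \ref{condition} for the function $l$ of the statement, and then to read off $\underline{\cd}\Gamma=\vcd\Gamma$. Begin with a reduction: if $h\Gamma=\infty$ then $\vcd\Gamma=\infty$, and since restricting a Bredon projective resolution of $\underline R$ to a torsion free finite index subgroup gives an ordinary projective resolution of $R$, one always has $\underline{\cd}\Gamma\geq\vcd\Gamma$; so both sides are infinite and nothing is to be proved. Hence assume $h\Gamma<\infty$; then $\Gamma$ is countable, virtually torsion free with a bound on the orders of its finite subgroups, lies in $H\FF$, and $\pd_\Gamma B(\Gamma)=\vcd\Gamma<\infty$. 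By Remark \ref{resfinsubgroups}, $\pd_{WH}\Sigma\widetilde\FF_{H\bullet}<\infty$ for every finite $H$; together with the finiteness of the number of conjugacy classes of finite subgroups of $\Gamma$ and Theorem \ref{dimension} this gives $\underline{\cd}\Gamma<\infty$, so Theorem \ref{condition} is applicable.

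Next I would check that $l$ is admissible. For a finite subgroup $H$, $C_\Gamma(H)H/H\cong C_\Gamma(H)/Z(H)$ is normal in $WH$ with index dividing $|\operatorname{Aut}(H)|$, so $WH$ is again virtually-(torsion free solvable) of finite Hirsch rank $hC_\Gamma(H)$; thus $hWH=hC_\Gamma(H)$, the function $l$ is well defined, and $\pd_{WH}B(WH)=\vcd(WH)$ equals the cohomological dimension of a torsion free finite index subgroup of $WH$, which is torsion free solvable of Hirsch rank $hWH$. If $\Gamma$ is not of type $\FP_\infty$ then, since a countable torsion free solvable group of finite Hirsch rank $r$ has cohomological dimension at most $r+1$, this gives $\pd_{WH}B(WH)\leq hWH+1=l(H)$. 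If $\Gamma$ is of type $\FP_\infty$ it is elementary amenable of type $\FP_\infty$, so by \cite{kmn} there is a cocompact $\egamma$; its $H$-fixed point set is then a cocompact model for $\underline{\operatorname{E}}(WH)$, so $WH$ is of type $\FP_\infty$ and $\pd_{WH}B(WH)=\vcd(WH)=hWH=l(H)$. Finally $l$ is order reversing, because $H<K$ forces $C_\Gamma(K)\leq C_\Gamma(H)$, hence $hC_\Gamma(K)\leq hC_\Gamma(H)$ and $l(K)\leq l(H)$.

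The core of the proof is the dichotomy i)/ii) of Theorem \ref{condition}, and here the substantive work has already been done in Theorem \ref{solv} and Proposition \ref{contractible}. Fix a finite subgroup $H$. If there is a finite $K$ with $H<K$ and $hC_\Gamma(H)=hC_\Gamma(K)$, then Theorem \ref{solv} gives $|C_\Gamma(H):C_\Gamma(K)|<\infty$, and Proposition \ref{contractible}, applied with $T=K$, gives $|\FF_H|\simeq\ast$, so alternative ii) holds for $H$. Otherwise $hC_\Gamma(K)\neq hC_\Gamma(H)$ for every finite $K>H$, and combined with the monotonicity just noted this forces $hC_\Gamma(K)<hC_\Gamma(H)$, i.e.\ $l(K)<l(H)$, for all $H<K$, so alternative i) holds. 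Thus every finite $H$ satisfies i) or ii), the hypotheses of Theorem \ref{condition} are met, and it yields $\underline{\cd}\Gamma\leq l(1)$.

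To conclude, $W1=\Gamma$, so $l(1)=h\Gamma$ when $\Gamma$ is of type $\FP_\infty$ and $l(1)=h\Gamma+1$ otherwise; applying the computation of $\vcd(WH)$ from the second paragraph with $H=1$ shows $l(1)=\vcd\Gamma$ in either case. Hence $\underline{\cd}\Gamma\leq\vcd\Gamma$, and together with the reverse inequality noted at the outset, $\underline{\cd}\Gamma=\vcd\Gamma$. I expect the genuine frictions to be exactly the two points needed to invoke Theorem \ref{condition}: establishing $\underline{\cd}\Gamma<\infty$ a priori (and, along the way, that $\Gamma$ has finitely many conjugacy classes of finite subgroups), and the bound $\pd_{WH}B(WH)\leq l(H)$ — which in the $\FP_\infty$ case rests on a cocompact $\egamma$ descending to a cocompact model for $\underline{\operatorname{E}}(WH)$; the remainder is assembling Theorems \ref{solv}, \ref{contractible}, \ref{dimension} and \ref{condition}.
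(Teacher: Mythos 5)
Your proof is correct and follows essentially the same route as the paper: the dichotomy i)/ii) is settled by combining Theorem \ref{solv} with Proposition \ref{contractible}, and the bound $\pd_{WH}B(WH)\leq l(H)$ comes from $\vcd WH\leq hWH+1$ together with the fact that $WH$ is of type $\FP_\infty$ when $\Gamma$ is. The extra material you supply --- the reduction to finite Hirsch rank, the verification that $\underline{\cd}\Gamma<\infty$ and that the finite subgroups have bounded order, and the cocompact-$\egamma$ fixed-point argument in place of the citation of \cite{martineznucinkis} Theorem 3.13 --- merely fills in hypotheses of Theorem \ref{condition} that the paper leaves implicit.
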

\begin{proof} In both cases, note that $l(K)=l(H)$ for $H<K\in\FF$ implies $hC_\Gamma(H)=hWH=hWK=hC_\Gamma(K)$. By Theorem \ref{solv} then $|C_\Gamma(H):C_\Gamma(K)|<\infty$ and Proposition \ref{contractible} implies $|\FF_H|\simeq\ast$. We only have to check that $\pd_{WH}B(WH)\leq l(H)$. Observe first that in this case $\pd_{WH}B(WH)=\vcd WH\leq hWH+1$ as our groups are virtually (torsion free-solvable). If moreover $\Gamma$ is of type $\FP_\infty$ then \cite{martineznucinkis} Theorem 3.13 implies that also the Weyl groups $WH$ are of type $\FP_\infty$ and therefore $\vcd WH=hWH.$
\end{proof}

\noindent As a by-product we can prove

\begin{theorem}\label{contractible2}  The converse to Proposition \ref{contractible} is also true except of possibly the case when $WH$ is of type $\FP_\infty$ but $\Gamma$ is not. \end{theorem}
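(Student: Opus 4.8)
The plan is to prove the following equivalent reformulation of the statement: if $|\FF_H|\simeq\ast$ \emph{and} there is no finite $T$ with $H<T$ and $|C_\Gamma(H):C_\Gamma(T)|<\infty$, then $WH$ is of type $\FP_\infty$ while $\Gamma$ is not. First I would unwind the two hypotheses. The condition $|\FF_H|\simeq\ast$ already forces $\FF_H\neq\emptyset$, and by Theorem \ref{solv} the non-existence of such a $T$ means exactly that $hC_\Gamma(K)<hC_\Gamma(H)$ for every finite $K$ with $H<K$; since $C_\Gamma(K)$ has finite index in $N_\Gamma(K)$ we have $hC_\Gamma(K)=hWK$, so, writing $l$ for the function of Corollary \ref{dimension2}, this says $l(K)\leq l(H)-1$ for all $H<K$. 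In other words we are in case (i) of Theorem \ref{condition} at the subgroup $H$.

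The heart of the proof is to bound $\vcd WH$ from below using the acyclicity of $|\FF_H|$. Since $|\FF_H|\simeq\ast$, the augmented suspended complex $\Sigma\widetilde\FF_{H\bullet}$ is exact, hence so is $\Sigma\widetilde\FF_{H\bullet}\otimes B(WH)$. Feeding this into the short exact sequence of $WH$-chain complexes
$$B(WH)_\bullet\rightarrowtail\Sigma\widetilde\FF_{H\bullet}\otimes B(WH)\twoheadrightarrow\Sigma\FF_{H\bullet}\otimes B(WH)$$
that already appears in the proof of Lemma \ref{induction}, and using that the projective dimension of an exact complex is $-\infty$, the long exact $\EXT$-sequence collapses to $\pd_{WH}\Sigma\FF_{H\bullet}\otimes B(WH)=\pd_{WH}B(WH)+1=\vcd WH+1$. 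On the other hand, the proof of Lemma \ref{induction} shows $\pd_{WH}\FF_{H\bullet}\otimes B(WH)\leq\max_{H<K}\pd_{WK}\Sigma\widetilde\FF_{K\bullet}$, and the inductive argument in the proof of Theorem \ref{condition}, which is available here thanks to Corollary \ref{dimension2}, gives $\pd_{WK}\Sigma\widetilde\FF_{K\bullet}\leq l(K)$ for every finite $K\leq\Gamma$. Combining these with case (i) above I would obtain $\vcd WH\leq\max_{H<K}l(K)\leq l(H)-1$.

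It then remains to insert the solvable structure. Since $l(H)\leq hWH+1$ in all cases, the last inequality gives $\vcd WH\leq hWH$; as $\vcd WH\geq hWH$ holds for every virtually-(torsion free solvable) group of finite Hirsch rank, we conclude $\vcd WH=hWH$, and for such groups this is equivalent to $WH$ being of type $\FP_\infty$. Finally, if $\Gamma$ were also of type $\FP_\infty$ we would have $l(H)=hWH$, and the very same computation would force $\vcd WH\leq hWH-1$, contradicting $\vcd WH=hWH$; hence $\Gamma$ is not of type $\FP_\infty$ and we are precisely in the case excluded from the statement. The step I expect to require the most care is the lower bound $\pd_{WH}\Sigma\FF_{H\bullet}\otimes B(WH)=\vcd WH+1$, which reuses the same short exact sequence that produced the upper bound in Lemma \ref{induction}; the only slack that survives the two inequalities is the $\pm1$ gap between $hWH$ and $\vcd WH$, and that gap is exactly the classical $\FP_\infty$-dichotomy for soluble groups of finite Hirsch rank (Gildenhuys--Strebel, Bieri) which forces the extra $+1$ in Corollary \ref{dimension2}.
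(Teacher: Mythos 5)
Your proposal is correct and follows essentially the same route as the paper's own proof: the same short exact sequence $(\ref{seq2})$ and collapsing $\EXT$ long exact sequence giving $\vcd WH=\pd_{WH}\FF_{H\bullet}\otimes B(WH)$, the same bounds from the proofs of Lemma \ref{induction} and Theorem \ref{condition}, and the same appeal to Theorem \ref{solv} and the soluble $\FP_\infty$/Hirsch-rank dichotomy. The only difference is organizational (you argue contrapositively and make explicit the Gildenhuys--Strebel input that the paper uses implicitly in the step ``$WH$ not $\FP_\infty$ implies $\vcd WH=hWH+1$''), which does not change the substance.
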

\begin{proof}
Let $G\leq\Gamma$ be a finite index torsion free subgroup of $\Gamma$. Take $H\leq\Gamma$ finite with $\FF_H\simeq\ast$. Then the Ext functors of the complex $\Sigma\widetilde{\FF}_{H\bullet}$ vanish so the long exact sequence associated to (\ref{seq2}) in the proof of Lemma \ref{induction} implies that
$$\vcd WH=\pd_{WH}B(WH)=\pd_{WH}B(WH)\otimes\Sigma{\FF}_{H\bullet}-1
=\pd_{WH}B(WH)\otimes{\FF}_{H\bullet}$$
and by that proof
$$\pd_{WH}B(WH)\otimes{\FF_H}_\bullet\leq\max_{H<K}\pd_{WK}\Sigma\widetilde\FF_{K\bullet}.$$
But for $l(H)$ as in Corollary \ref{dimension2}, we have proven in Theorem \ref{condition} that for any $H<K$ finite,
$$\pd_{WK}\Sigma\widetilde\FF_{K\bullet}\leq l(K)\leq l(H).$$
This means that there is some $H<K$ finite with
$\vcd WH\leq l(K)\leq l(H).$ If $\Gamma$ is $\FP_\infty$ or if $WH$ is not $\FP_\infty$, $l(H)=\vcd WH$ so we deduce $l(H)=l(K)$.
\end{proof}

We end this section with an example of an explicit computation of a Bredon projective resolution using the inductive procedure of Theorem \ref{resolution}.

\begin{example}\label{explicitBredon} Let $\Gamma=F\ltimes A$ with $A=<a,b>\cong\Z^2$, $F=S_3$ and the action of $F$ on $A$ given by the following matrix representation of $F$ (with $x^2=y^3=1$):
$$x=\begin{pmatrix}
0&1\\
1&0\\
\end{pmatrix},\quad
y=\begin{pmatrix}
0&-1\\
1&-1\\
\end{pmatrix}.$$
An easy computation of first cohomology groups shows that $\{1,<x>,<y>,$ $F,<ya>\}$ is a complete set of representatives of the $\Gamma$-conjugacy classes of finite subgroups.

The inductive procedure yields the following free resolution of $\Z$ by Bredon contramodules (but note that we do not describe the connecting maps):

$$P_1^\Gamma\to P_{<x>}^\Gamma\bigoplus_2 P_1^\Gamma\to P_1^\Gamma\oplus P_F^\Gamma\oplus P_{<ya>}^\Gamma.$$

\end{example}

\section{Equivariant Euler characteristics for $\underline{\operatorname E}\Gamma$ for solvable groups.}\label{euler}

\noindent Following \cite{luckL2}, Section 6 we let $A(\Gamma)$ be the Grothendieck group of finitely generated proper (i.e., with finite stabilizers) $\Gamma$-sets. We denote the class in $A(\Gamma)$ of a transitive set having $S$ as a point stabilizer by $[\Gamma/S]$. Given a $\Gamma$-$CW$-complex $X$, we say that the $\Gamma$-action on $X$ is admissible if whenever a simplex is fixed setwise, it is fixed pointwise.

\begin{definition} (\cite{luckL2}, Definition 6.84) Let $X$ be a proper cocompact $\Gamma$-$CW$-complex with an admissible $\Gamma$-action. The equivariant Euler class of $X$ is
$$\chi^\Gamma(X):=\sum_{\sigma\in X/\Gamma}(-1)^{\text{dim}\sigma}[\Gamma/\Gamma_\sigma]\in A(\Gamma)$$
where $\Gamma_\sigma$ denotes the stabilizer in $\Gamma$ of the cell $\sigma$.
\end{definition}

In this section we compute a formula for $\chi^\Gamma(X)$ when $\Gamma$ is an elementary amenable group of type $\FP_\infty$ and $X$ a cocompact model for $\underline{E}\Gamma$ (which exists by \cite{kmn}). 

\begin{definition} (\cite{Brown2}) Let $\Gamma$ be a group and $P_\bullet$ a finite chain complex of free $\Gamma$-modules. The equivariant Euler characteristic of $P_\bullet$ is
$$e^{\Gamma}(P_\bullet)=\sum_t(-1)^t\text{rk}_\Z(P_t\otimes_{\Z\Gamma}\Z).$$
\end{definition}

This definition can be extended to finite chains of projectives using Hattori-Stallings ranks, we do not need to do that here as all the complexes we consider are build of free modules. It can also be extended to arbitrary chain complexes of $\Gamma$-modules $C_\bullet$ which admit a finite free (or just projective) resolution $P_\bullet$ via
$$e^{\Gamma}(C_\bullet):=e^{\Gamma}(P_\bullet).$$
In particular, we can do this for single $\Gamma$-modules (seen as chain complexes concentrated in degree 0). Then by \cite{Brown2} we have:
\begin{equation}\label{sum}
e^{\Gamma}(C_\bullet)=\sum_t(-1)^te^{\Gamma}(C_t)
\end{equation}

Let $P_\bullet$ be a finite chain complex of free Bredon contramodules.
For each degree $t$, let $C_{Ht}$ be the cokernel of the map
$$\text{colim}_{L\in\FF_H}P_t(L)\rightarrowtail P_t(H).$$
Then we have a chain complex of $WH$-modules $C_{H\bullet}$.
Note that if we let $C_\bullet$ be as in Theorem \ref{dimension} the subcomplex of $P_\bullet$ given by those free contramodules based at $\FF_H$, then the same argument of Lemma \ref{homot} implies that $\text{colim}_{L\in\FF_H}P_\bullet(L)=C_\bullet(H)$. In other words, $C_{H\bullet}=P_\bullet(H)/C_\bullet(H)$ thus $C_{H\bullet}$ consists of free  $WH$-modules.

\begin{definition}\label{eulerchcomplex}  With $P_\bullet$, $C_{H\bullet}$ as before, the equivariant Euler class of $P_\bullet$ is
$$\chi^\Gamma(P_\bullet):=\sum_{H\in\FF/\Gamma}e^{WH}(C_{H\bullet})[\Gamma/H]$$
where $\FF/\Gamma$ denotes the set of orbits in $\FF$ under the $\Gamma$-conjugation action.
\end{definition}

\begin{remark}\label{welldefined} Assume that the chain complexes of Bredon contramodules $P_\bullet$ and $P'_\bullet$ are Bredon homotopy equivalent. This means that for any finite $H\leq\Gamma$, the $WH$-complexes $P_\bullet(H)$ and $P'_\bullet(H)$ are also homotopy equivalent, moreover all these homotopies are compatible with the morphisms induced by $\Gamma$-maps $\Gamma/H\to\Gamma/L$. Therefore there is a commutative diagram of $WH$-chain complexes
\begin{equation}\label{diagram}
\begin{aligned}
&\text{colim}_{L\in\FF_H}P_\bullet(L)&\rightarrowtail &P_\bullet(H)\\
&\quad\downarrow&&\downarrow\\
&\text{colim}_{L\in\FF_H}P'_\bullet(L)&\rightarrowtail &P'_\bullet(H)\\
\end{aligned}
\end{equation}
where the vertical arrows come from the given homotopy equivalence $P_\bullet\to P'_\bullet$ and are also homotopy equivalences. 
The fact that all the relevant homotopies commute with the morphisms induced by $\Gamma$-maps implies that the homotopy equivalences of (\ref{diagram}) extend to a $WH$-homotopy equivalence between the cokernels, i.e., between $C_{H\bullet}$ and $C'_{H\bullet}$. Thus the equivariant Euler class is invariant under Bredon homotopy equivalences. \end{remark}

 The next Lemma is basically \cite{luckL2} Lemma 6.85 but we prove it here for completeness.

\begin{lemma}\label{eulercomplex} Let $X$ be a proper cocompact $\Gamma$-$CW$-complex and $P_\bullet$ the associated Bredon chain complex. Then $$\chi^\Gamma(P_\bullet)=\chi^\Gamma(X).$$
\end{lemma}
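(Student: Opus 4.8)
The plan is to unwind both sides of the desired equality into sums indexed by the orbits $H\in\FF/\Gamma$ and compare the coefficient of each $[\Gamma/H]$. Recall that, by definition, $\chi^\Gamma(X)=\sum_{\sigma\in X/\Gamma}(-1)^{\dim\sigma}[\Gamma/\Gamma_\sigma]$, and that the associated Bredon chain complex $P_\bullet$ has, in each degree $t$, a free summand $P^\Gamma_{\Gamma_\sigma}$ for each $\Gamma$-orbit of $t$-cells $\sigma$ of $X$. So $P_t=\bigoplus_{H\in\FF/\Gamma}(P^\Gamma_H)^{(n_{H,t})}$ where $n_{H,t}$ is the number of $\Gamma$-orbits of $t$-cells with stabilizer (conjugate to) $H$. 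This already expresses $\chi^\Gamma(X)$ as $\sum_{H\in\FF/\Gamma}\big(\sum_t(-1)^t n_{H,t}\big)[\Gamma/H]$.

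Next I would compute $e^{WH}(C_{H\bullet})$ directly from the cellular description. Fix a finite subgroup $H$. Using $P^\Gamma_K(H)=R<(\Gamma/K)^H>$, the $WH$-module $P_t(H)=\bigoplus_{\sigma}R<(\Gamma/\Gamma_\sigma)^H>$ is the permutation module on the $t$-cells of $X^H$; and the submodule $\operatorname{colim}_{L\in\FF_H}P_t(L)=C_\bullet(H)_t$ is, by the identification in Remark on $C_{H\bullet}$ and by the colimit computation in Lemma \ref{homot}, the permutation module on those cells of $X^H$ that are fixed by some strictly larger finite subgroup, i.e. the cells lying in the subspace $X^H_{\mathrm{sing}}:=\bigcup_{H<L}X^L$. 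Hence $C_{Ht}$ is the free $WH$-module on the set of $WH$-orbits of $t$-cells $\sigma$ of $X^H$ with $\Gamma_\sigma$ exactly conjugate to $H$ (equivalently, the $t$-cells of $X^H$ not lying in $X^H_{\mathrm{sing}}$), so $\operatorname{rk}_\Z(C_{Ht}\otimes_{\Z WH}\Z)$ is precisely that number of orbits. A short bookkeeping argument — translating "$WH$-orbits of cells of $X^H$ with stabilizer exactly $H$" into "$\Gamma$-orbits of cells of $X$ with stabilizer conjugate to $H$" via the standard bijection between $X/\Gamma$-cells with a given stabilizer type and $WH$-orbits on the $H$-fixed point set — identifies this count with $n_{H,t}$ above. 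Summing with signs gives $e^{WH}(C_{H\bullet})=\sum_t(-1)^t n_{H,t}$, and comparing coefficients of $[\Gamma/H]$ finishes the proof.

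The one technical point to be careful about is that $e^{WH}(C_{H\bullet})$ as it appears in Definition \ref{eulerchcomplex} is defined via a finite free resolution, not termwise; but since $C_{H\bullet}$ is already a finite complex of free $WH$-modules (this is exactly the content of the sentence "$C_{H\bullet}$ consists of free $WH$-modules"), formula (\ref{sum}) applies and $e^{WH}(C_{H\bullet})=\sum_t(-1)^t e^{WH}(C_{Ht})=\sum_t(-1)^t\operatorname{rk}_\Z(C_{Ht}\otimes_{\Z WH}\Z)$, so the termwise computation above is legitimate. (An alternative, cleaner route avoiding even this remark: argue directly that the class of $\Gamma$-sets $\coprod_t(-1)^t\{t\text{-cells of }X^H\text{ with stabilizer exactly }H\}$ in $A(WH)$ is $e^{WH}(C_{H\bullet})$, which is immediate since $C_{H\bullet}$ is the associated free complex.)

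The main obstacle I anticipate is purely organizational rather than conceptual: correctly matching the indexing of $\Gamma$-orbits of cells of $X$ (as used in $\chi^\Gamma(X)$) with $WH$-orbits of cells in the fixed-point sets $X^H$ (as used in the $C_{H\bullet}$), making sure the "exactly $H$" versus "conjugate to a subgroup containing $H$" distinction is handled consistently and that each cell of $X$ is counted once and only once across the sum over $H\in\FF/\Gamma$. This is exactly the standard decomposition of a proper $\Gamma$-CW-complex by stabilizer type, so it should go through routinely once set up carefully; I would cite \cite{luckL2} Lemma 6.85 for the precise combinatorial identity if a fully detailed argument is not wanted.
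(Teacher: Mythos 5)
Your proposal is correct and follows essentially the same route as the paper's proof: identify $C_{Ht}$ with the free $WH$-module on the $WH$-orbits of $t$-cells of $X^H$ whose stabilizer is exactly $H$, use the standard bijection between such orbits and the $\Gamma$-orbits of cells of $X$ with stabilizer conjugate to $H$, and compare coefficients of $[\Gamma/H]$. The extra care you take about computing $e^{WH}(C_{H\bullet})$ termwise via formula (\ref{sum}) is a point the paper leaves implicit, but the argument is the same.
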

\begin{proof} Recall that $P_\bullet(H)$ is the chain complex of $X^H$. It is a consequence of the observation before Definition \ref{eulerchcomplex} that (using the same notation as there), 
$$|\{\sigma\in X^H/WH;\dim\sigma=t,H=\Gamma_\sigma\}|=\text{rk}_{\Z}C_{Ht}\otimes_{\Z\Gamma}\Z.$$
 Therefore 
$$\begin{aligned}
\chi^\Gamma(X):=\sum_{\sigma\in X/\Gamma}(-1)^{\text{dim}\sigma}[\Gamma/\Gamma_\sigma]
=\sum_{H\in\FF/\Gamma}\sum_{\sigma\in X/\Gamma\atop H=_\Gamma\Gamma_\sigma}(-1)^{\text{dim}\sigma}[\Gamma/H]\\
=\sum_{H\in\FF/\Gamma}\sum_{\sigma\in X^H/WH\atop H=\Gamma_\sigma}(-1)^{\text{dim}\sigma}[\Gamma/H]=\chi^{\Gamma}(P_\bullet).\\
\end{aligned}$$\end{proof}

Remark \ref{welldefined} and Lemma \ref{eulercomplex} have the consequence that if the $\Gamma$ $CW$-complexes $X$ and $Y$ are equivariantly homotopy equivalent, then $\chi^\Gamma(X)=\chi^\Gamma(Y).$ In particular, we may talk of $\chi^\Gamma(\underline{\text{E}}\Gamma)$, as this does not depend on the explicit model of $\underline{\text{E}}\Gamma$ that we choose. Note also that if we denote as in Theorem \ref{resolution} by ${P_H}_\bullet$ a projective resolution of $\Sigma\widetilde{\FF}_{H\bullet}$ which is finite and free, then

\begin{equation}\label{firstobs}\begin{aligned}\chi^\Gamma(\underline{E}\Gamma)
=\sum_{H\in\FF/\Gamma}e^{WH}({P_H}_\bullet)[\Gamma/H].\\
\end{aligned}\end{equation}

From now on we fix a group $\Gamma$ which we assume to be elementary amenable of type $\FP_\infty$. Recall that such $\Gamma$ is virtually-(torsion free solvable), moreover as we have seen in the proof of \ref{solv}, it has a finite index normal subgroup $G$ with a chain with torsion free abelian factors.

Now, for any finite $H\leq\Gamma,$ $C_G(H)$ is isomorphic to a finite index subgroup of $WH$, its index being $|N_\Gamma(H):HC_G(H)|$ thus

\begin{equation}\label{also}\begin{aligned}
e^{WH}({P_H}_\bullet)={|H|\over |N_\Gamma(H):C_G(H)|}e^{C_G(H)}({P_H}_\bullet).\\\end{aligned}\end{equation}

These coefficients are easy to compute at least in some cases. Observe first that as ${P_H}_\bullet$ is a free $WH$-cover of $\Sigma\widetilde{\FF}_{H\bullet}$, it is also a $C_G(H)$-free cover thus $e^{C_G(H)}({P_H}_\bullet)=e^{C_G(H)}(\Sigma\widetilde{\FF}_{H\bullet})$.
Consider the following set
$$\Omega:=\{F\leq\Gamma\mid\text{ maximal finite, }C_G(F)=1\}.$$

\begin{lemma}\label{maxfin}
Let $H\in\FF$ with $C_G(H)=1$. 
\begin{itemize}
\item[i)] If $H\in\Omega$, then $H=N_\Gamma(H)$ and $e^{C_G(H)}({P_H}_\bullet)=e^{WH}({P_H}_\bullet)=1$.

\item[ii)] In other case, $e^{C_G(H)}({P_H}_\bullet)=e^{WH}({P_H}_\bullet)=0.$
\end{itemize}
\end{lemma}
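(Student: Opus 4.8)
The plan is to reduce both equivariant Euler characteristics to the ordinary Euler characteristic of $|\FF_H|$, and to use that the hypothesis $C_G(H)=1$ forces all the relevant normalizers and centralizers to be finite. First I would record these finiteness facts. Since $G$ is torsion free and $H$ is finite, $H\cap G=1$, so $C_\Gamma(H)$ embeds into the finite group $\Gamma/G$ and is finite. Moreover $N_G(H)$ acts on $H$ with kernel $C_G(H)=1$, hence embeds into the finite group $\operatorname{Aut}(H)$; being a subgroup of the torsion free group $G$ it must then be trivial, $N_G(H)=1$. Consequently $N_\Gamma(H)\cap G=N_G(H)=1$, so $N_\Gamma(H)$ embeds into $\Gamma/G$ and is finite; in particular $WH=N_\Gamma(H)/H$ is finite.

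Next I would set up the Euler-characteristic dictionary. As observed just before the statement, ${P_H}_\bullet$ is also a $C_G(H)$-free cover of $\Sigma\widetilde{\FF}_{H\bullet}$, so $e^{C_G(H)}({P_H}_\bullet)=e^{C_G(H)}(\Sigma\widetilde{\FF}_{H\bullet})$; since $C_G(H)=1$ this is simply the alternating sum of $\Z$-ranks of $\Sigma\widetilde{\FF}_{H\bullet}$. Because $\Sigma\widetilde{\FF}_{H\bullet}$ has $\Z$ in degree $0$ and the $j$-simplices of $|\FF_H|$ contribute in degree $j+1$, this sum is $1-\chi(|\FF_H|)$ (with the convention $\chi(\emptyset)=0$). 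Combining this with (\ref{also}), which for $C_G(H)=1$ reads $e^{WH}({P_H}_\bullet)=\frac{1}{|WH|}\,e^{C_G(H)}({P_H}_\bullet)$, it remains only to evaluate $\chi(|\FF_H|)$ and, in case (i), $|WH|$.

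Now I would split into the two cases. If $H\in\Omega$ then $H$ is maximal finite, so there is no finite $K>H$; thus $\FF_H=\emptyset$ and $1-\chi(|\FF_H|)=1$. Also $N_\Gamma(H)$ is a finite group containing $H$, so maximality forces $N_\Gamma(H)=H$, i.e.\ $WH=1$; hence $e^{C_G(H)}({P_H}_\bullet)=e^{WH}({P_H}_\bullet)=1$, giving (i). If instead $C_G(H)=1$ but $H\notin\Omega$, then (by definition of $\Omega$) $H$ cannot be maximal finite, so I may choose a finite $T$ with $H<T$. Then $C_\Gamma(T)\le C_\Gamma(H)$, and both are finite by the first paragraph, so $|C_\Gamma(H):C_\Gamma(T)|<\infty$; Proposition \ref{contractible} then gives $|\FF_H|\simeq\ast$ and hence $\chi(|\FF_H|)=1$. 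Therefore $e^{C_G(H)}({P_H}_\bullet)=0$, and by (\ref{also}) (equivalently, because $\Sigma\widetilde{\FF}_{H\bullet}$ is then acyclic, so its finite free resolution has vanishing equivariant Euler characteristic) also $e^{WH}({P_H}_\bullet)=0$, giving (ii).

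The only points that genuinely need care are the degenerate end of the Euler-characteristic dictionary — keeping the case $\FF_H=\emptyset$, which produces the value $1$, apart from the case $|\FF_H|\simeq\ast$, which produces $0$ — and checking that the hypothesis of Proposition \ref{contractible} is met; the latter is painless here precisely because $C_G(H)=1$ makes all the centralizers in sight finite, so any $T$ with $H<T$ works. I do not anticipate a real obstacle beyond this bookkeeping.
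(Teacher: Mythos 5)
Your proof is correct and follows essentially the same route as the paper: in case (i) you observe $\FF_H=\emptyset$ so $\Sigma\widetilde{\FF}_{H\bullet}$ is $R$ concentrated in degree $0$ and maximality of $H$ inside the finite group $N_\Gamma(H)$ gives $N_\Gamma(H)=H$, while in case (ii) you invoke Proposition \ref{contractible} to get $|\FF_H|\simeq\ast$, hence $\Sigma\widetilde{\FF}_{H\bullet}$ exact with zero projective cover and vanishing Euler characteristic. The only difference is that you spell out the finiteness of $C_\Gamma(H)$ and $N_\Gamma(H)$ (and hence the hypothesis of Proposition \ref{contractible}), which the paper leaves implicit.
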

\begin{proof} In case i), as $H$ is maximal finite, $\Sigma\widetilde{\FF}_{H\bullet}=R_\bullet$ concentrated at degree 0. Moreover the condition $C_G(H)=1$ implies that $N_\Gamma(H)$ is finite, so $H=N_\Gamma(H)$.
In case ii), Lemma \ref{contractible} implies $\FF_H\simeq\ast$ thus $\Sigma\widetilde{\FF}_{H\bullet}$ is exact and has the zero chain complex as projective cover.
\end{proof}

\noindent To compute $e^{C_G(H)}({P_H}_\bullet)$ in general, we proceed as follows.  First recall that as our groups are amenable, whenever $G\neq 1$, $e^G(\Z)=0$ (see \cite{Eckmann}).
 
We may decompose $e^{C_G(H)}(\Sigma\widetilde{\FF}_{H\bullet})$ using (\ref{sum}) as for $t>0$, $\Sigma\widetilde{\FF}_{Ht}$ is  a sum of permutation modules of the form $\Z_{N_\Gamma(\sigma)}\uparrow^{N_\Gamma(H))}$ where $\sigma$ runs over the $WH$-orbits of $t$-cells $H<H_1<\ldots H_t$. As $N_\Gamma(\sigma)\cap C_G(H)=\cap N_\Gamma(H_i)\cap G=C_G(H_t)$, the cell $\sigma$ lies in a free $C_G(H)$-orbit if and only if $C_G(H_t)=1$ and in other case we have
$$e^{C_G(H)}(\Z_{N_\Gamma(\sigma)}\uparrow^{N_\Gamma(H)})=\sum_x e^{C_G(H)}(\Z_{C_G(H_t)^{x^{-1}}}\uparrow^{C_G(H)})=\sum_x e^{C_G(H_t)^{x^{-1}}}(\Z)=0$$
where $x$ runs over the (finite) set $N_\Gamma(\sigma)\backslash N_\Gamma(H)/C_G(H)$.
This means that we only have to consider the summands associated to cells $\sigma$ lying on free $C_G(H)$-orbits. Also, if $C_G(H)\neq 1$ we have
$e^{C_G(H)}(\Z)=0$ thus

\begin{equation}\label{decom}
\begin{aligned}
e^{C_G(H)}(\Sigma\widetilde{\FF}_{H\bullet})&=
e^{C_G(H)}(\Sigma{\FF}_{H\bullet})=-e^{C_G(H)}({\FF}_{H\bullet})\\
&=-\sum\{(-1)^{t}\mid \sigma\in\text{free $C_G(H)$-orbits of $t$-cells in }\FF_H\}.\\
\end{aligned}
\end{equation}
In particular, if $H$ is not contained in any subgroup $F$ with $C_G(F)=1$, then $e^{C_G(H)}({P_H}_\bullet)=0=e^{WH}({P_H}_\bullet)$.

\begin{lemma}\label{unique} Let $S\leq \Gamma$ be finite with $C_G(S)=1$. Then there is a unique $F\in\Omega$ such that $S\leq F$.
\end{lemma}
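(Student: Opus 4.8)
The plan is to prove existence and uniqueness separately, using the key rigidity fact that has already appeared in this section, namely Lemma \ref{keypoint}: if $G\normal\Gamma$ is torsion free and $g\in G$ satisfies $L^g\leq H$ for finite subgroups $L\leq H$, then $L=L^g$; consequently $L\leq H^x$ with $x\in G$ forces $x\in C_G(L)$.

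\textbf{Existence.} First I would observe that $S$ is contained in \emph{some} maximal finite subgroup. Indeed, $\Gamma$ has a bound on the orders of its finite subgroups (it is virtually-(torsion free solvable) of type $\FP_\infty$, with a finite index torsion free $G$), so any ascending chain of finite subgroups containing $S$ stabilizes; picking $F$ maximal among finite subgroups containing $S$ gives a maximal finite subgroup $F\geq S$. It remains to check $C_G(F)=1$: since $C_G(F)\leq C_G(S)=1$, this is automatic. Hence $F\in\Omega$.

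\textbf{Uniqueness.} Suppose $S\leq F$ and $S\leq F'$ with $F,F'\in\Omega$. The idea is to show $F$ and $F'$ are $G$-conjugate and then that the conjugating element centralizes $S$, which by Lemma \ref{keypoint} forces it to be trivial, giving $F=F'$. To get the $G$-conjugacy I would pass to the quotient $\bar\Gamma=\Gamma/G$: the images $\bar F,\bar F'$ of $F,F'$ in the finite group $\Gamma/G$ are finite, and since $F\cap G=1$ (as $F$ is finite and $G$ torsion free) the projection $F\to\bar F$ is an isomorphism, and similarly for $F'$; so $F$ and $F'$ are two complements of $G$ in the subgroup $\langle F,F'\rangle G$ (or more precisely in $FG$, $F'G$). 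The cohomological vanishing argument used in the proof of Proposition \ref{contractible} — reducing to a group of the form (finite)$\ltimes\Q^n$, where $H^*$ of the finite quotient with $\Q$-coefficients vanishes — shows that after enlarging $G$ appropriately all relevant complements of the rationalized abelian part are conjugate by an element of that abelian part, hence $F^{x}= F'$ for some $x\in G$ (using that $FG=F'G$, which holds because both project onto the same subgroup $\bar F=\bar F'$ of $\Gamma/G$: indeed $S\leq F\cap F'$ maps to a common subgroup and, since $C_G(S)=1$, Lemma \ref{keypoint} already pins down the $G$-coset structure). Now $S\leq F$ and $S=S\leq F'=F^x$, so $S^{x^{-1}}\leq F$ with $x^{-1}\in G$; Lemma \ref{keypoint} gives $x^{-1}\in C_G(S)=1$, whence $x=1$ and $F=F'$.

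\textbf{Main obstacle.} The delicate point is establishing that $F$ and $F'$ are actually $G$-conjugate, i.e. lifting the coincidence of their images in $\Gamma/G$ to an honest conjugacy in $\Gamma$. The naive obstruction lives in $H^2$ of a finite group with coefficients in the torsion-free abelian layers, which need not vanish integrally; the fix is exactly the rationalization trick from Proposition \ref{contractible} (embed into (finite)$\ltimes(A\otimes\Q)$ where the obstruction dies), combined with an induction along a $\Gamma$-invariant series of $G$ with torsion-free abelian factors, using Lemma \ref{inductionsolv} to propagate the hypothesis $C_G(S)=1$ down the quotients. Once $G$-conjugacy is in hand, the passage from conjugacy to equality via Lemma \ref{keypoint} is immediate, so I expect the bulk of the work — and the only genuinely nontrivial step — to be this conjugacy statement.
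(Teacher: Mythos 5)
Your existence argument is exactly the paper's ($F$ maximal finite containing $S$ gives $C_G(F)\leq C_G(S)=1$, so $F\in\Omega$), but your uniqueness argument has a genuine gap. The whole strategy rests on first establishing $FG=F'G$ (equivalently, that $F$ and $F'$ have the same image in the finite quotient $\Gamma/G$), since without that they cannot possibly be $G$-conjugate. You justify this only by the parenthetical remark that $S\leq F\cap F'$ "maps to a common subgroup" and that Lemma \ref{keypoint} "pins down the $G$-coset structure" — but $S\leq F\cap F'$ only gives $\bar S\leq\bar F\cap\bar F'$ in $\Gamma/G$, not $\bar F=\bar F'$, and Lemma \ref{keypoint} says nothing about two different overgroups of $S$. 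A priori nothing prevents $\bar F$ and $\bar F'$ from being distinct subgroups of $\Gamma/G$ both containing $\bar S$; ruling this out is essentially the content of the lemma, so the argument is circular at its crucial step. (The subsequent step — once $F^x=F'$ with $x\in G$, Lemma \ref{keypoint} forces $x\in C_G(S)=1$ — is fine.)

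The paper's route avoids this entirely and is much shorter: since $C_G(S)=1$ and $G$ has finite index, $C_\Gamma(S)$ is finite, so trivially $|C_\Gamma(S):C_\Gamma(F)|<\infty$. The proof of Proposition \ref{contractible} (applied with $H=S$, $T=F$) then shows that $\langle F,L\rangle$ is finite for \emph{every} finite $L>S$; taking $L=F_1$ and using maximality of both $F$ and $F_1$ inside the finite group $\langle F,F_1\rangle$ gives $F=F_1$. Note that this is exactly where the "two complements are $A$-conjugate" argument you were reaching for actually lives — but there it is applied inside the group $\langle FA, F_1A\rangle$ to a \emph{single} auxiliary complement of the rationalized abelian normal subgroup into which both $F$ and $F_1$ are conjugated, rather than to $F$ and $F_1$ directly, which is what sidesteps the need for $FG=F_1G$. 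If you want to keep your structure, replace the conjugacy claim by an appeal to the directedness statement already proved there.
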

\begin{proof} Choose $F$ maximal finite with $S\leq F$. Then $C_G(F)\leq C_G(S)=1$ thus $F\in\Omega$. We only have to prove the uniqueness, obviously we may assume that $S$ itself is not maximal finite. Assume that $S<F,F_1\in\Omega$. As  $|C_\Gamma(S):C_\Gamma(F)|<\infty$, the proof of Proposition \ref{contractible} implies that $<F,F_1>$ is finite so $F=F_1$.
\end{proof}

As a consequence of this Lemma, we have that if all the non-trivial finite subgroups act without non-trivial fixed points in $G$, then any finite subgroup is contained in a single maximal finite subgroup (see \cite{LangerLuck} Example 7.15 where also a particular case of the formula of Theorem \ref{mainformula} is considered).

\begin{definition} Let $F\in\Omega$ and $H<F$. We denote
$$\mathcal B_H^F:=\{H<S< F\mid 1<C_G(S)\}.$$
This is a poset. We denote by $e(\widetilde{\mathcal B}_{H\bullet}^F)$ the (ordinary) Euler characteristic of the result of augmenting its chain complex.
\end{definition}

Now we are ready to prove

\begin{theorem}\label{mainformula}
$$\chi^\Gamma(\underline{E}\Gamma)=\sum_{H\in\FF/\Gamma}(\sum_{F\in\Omega/N_\Gamma(H)\atop H\leq F}{1\over{|N_F(H):H|}}e(\widetilde{\mathcal B}_{H\bullet}^F))[\Gamma/H].$$
\end{theorem}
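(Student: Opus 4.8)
The strategy is to start from the formula already recorded in (\ref{firstobs}),
$$\chi^\Gamma(\underline{E}\Gamma)=\sum_{H\in\FF/\Gamma}e^{WH}({P_H}_\bullet)[\Gamma/H],$$
and to compute the coefficient $e^{WH}({P_H}_\bullet)$ for each conjugacy class of finite subgroups $H$ by reorganizing the chain complex $\Sigma\widetilde{\FF}_{H\bullet}$ according to the maximal finite subgroups containing $H$. By the discussion following (\ref{also}), $e^{WH}({P_H}_\bullet)=\frac{|H|}{|N_\Gamma(H):C_G(H)|}e^{C_G(H)}(\Sigma\widetilde{\FF}_{H\bullet})$, so the whole problem reduces to computing the single-group equivariant Euler characteristic $e^{C_G(H)}(\Sigma\widetilde{\FF}_{H\bullet})$.

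\textbf{Step 1: reduce to free $C_G(H)$-orbits and then to subgroups $S$ with $C_G(S)=1$.} By (\ref{decom}), the only simplices $\sigma\colon H<H_1<\dots<H_t$ of $\FF_H$ that contribute to $e^{C_G(H)}(\Sigma\widetilde{\FF}_{H\bullet})$ are those lying on free $C_G(H)$-orbits, i.e.\ with $C_G(H_t)=1$. By Lemma \ref{unique}, such a top group $H_t$ (and hence the whole simplex) is contained in a unique $F\in\Omega$. More generally, I will argue that a simplex $\sigma$ contributes only if \emph{all} of $H_1,\dots,H_t$ satisfy an appropriate condition: splitting each chain at the first index $i$ with $C_G(H_i)=1$, the tail $H_i<\dots<H_t$ sits inside a finite group (the join $\langle H_i,\dots,H_t\rangle$ is finite by the proof of Proposition \ref{contractible}, since $|C_\Gamma(H_i):C_\Gamma(H_t)|<\infty$), and in fact inside the unique $F\in\Omega$ over $H_i$; while for $j<i$ we have $C_G(H_j)\neq 1$, so $H_j\in\mathcal B^F_H$ once we know $H_j<F$. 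The point is that $H_j<H_i\le F$ forces $H_j<F$, so the whole chain lies in $F$. Thus each contributing simplex of $\FF_H$ with $C_G$-free top is a chain $H<H_1<\dots<H_t$ entirely inside a single $F\in\Omega$ with $H\le F$, in which at least the top term has trivial $C_G$; conversely every such chain inside $F$ has $C_G(H_t)$ either trivial or not, and I must track both the ``$\mathcal B^F_H$-part'' and the maximal group $F$ itself.

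\textbf{Step 2: identify the combinatorial Euler characteristic.} Fixing $F\in\Omega$ with $H\le F$, the chains $H<H_1<\dots<H_t<F$ with all $H_i$ in $\{H<S<F\}$ decompose according to whether $F$ appears and, if not, split at the last $H_i$ with $1<C_G(H_i)$: the bottom piece is a simplex of $\mathcal B^F_H$ and the top piece a chain in $\{S<F: C_G(S)=1\}$ whose join with any $\mathcal B^F_H$-vertex is still finite (inside $F$). Using that $C_G$-free simplices only contribute and that the ``upper'' poset $\{H<S<F : C_G(S)=1\}\cup\{F\}$ has a maximum $F$, hence contractible augmented chain complex with Euler characteristic $0$, the alternating sum over all chains inside $F$ collapses — after stripping the contractible cone on $F$ — to $e(\widetilde{\mathcal B}^F_{H\bullet})$, the augmented Euler characteristic of $\mathcal B^F_H$. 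Summing over the $N_\Gamma(H)$-orbits of $F\in\Omega$ containing $H$ (different $\Gamma$-conjugates of $F$ over $H$ that are $N_\Gamma(H)$-conjugate give the same contribution, and the $C_G(H)$-orbit counting inside a fixed $F$ contributes the index $|N_\Gamma(H):HC_G(H)|$ worth of stabilizer bookkeeping), one obtains
$$e^{C_G(H)}(\Sigma\widetilde{\FF}_{H\bullet})=\frac{|N_\Gamma(H):C_G(H)|}{|H|}\sum_{F\in\Omega/N_\Gamma(H)\atop H\le F}\frac{1}{|N_F(H):H|}e(\widetilde{\mathcal B}^F_{H\bullet}),$$
where the factor $1/|N_F(H):H|$ records that $N_F(H)/H$ acts on the relevant simplices and the orbit count contributes its reciprocal. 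Substituting into (\ref{firstobs}) via (\ref{also}) cancels the prefactor $\frac{|H|}{|N_\Gamma(H):C_G(H)|}$ and yields the claimed formula. (For $H\in\Omega$ itself this recovers the term $[\Gamma/H]$ of Lemma \ref{maxfin}(i), with $\mathcal B^H_H=\emptyset$ and $e(\widetilde{\mathcal B}^H_{H\bullet})=1$; for $H$ with $C_G(H)=1$ but $H\notin\Omega$ it gives $0$, matching Lemma \ref{maxfin}(ii).)

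\textbf{Main obstacle.} The delicate point is Step 2: correctly disentangling the chain complex $\FF_H$ into the ``lower'' poset $\mathcal B^F_H$ and the ``upper'' contractible part, and verifying that the order-complex Euler characteristic of the join-type decomposition factors as claimed — in particular that the upper poset (subgroups $S$ with $H<S<F$ and $C_G(S)=1$, together with $F$) really contributes a vanishing reduced Euler characteristic because of the maximum $F$, and that the $N_F(H)/H$-orbit bookkeeping produces exactly the coefficient $1/|N_F(H):H|$ rather than some other index. I would handle this by a clean filtration argument: filter $\FF_H$ (restricted to $C_G(H)$-free simplices) by the maximal finite subgroup $F\in\Omega$ over the top vertex, show successive quotients are (suspensions of) $\Z_{N_F(H)}\uparrow^{N_\Gamma(H)}$-type modules built from the join of $\mathcal B^F_H$ with the cone on $F$, and then apply (\ref{sum}) degree by degree, using $e^G(\Z)=0$ for $G\neq 1$ amenable to kill all non-free orbits as in (\ref{decom}).
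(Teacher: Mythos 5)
Your overall strategy coincides with the paper's: reduce via (\ref{firstobs}), (\ref{also}) and (\ref{decom}) to counting free $C_G(H)$-orbits of simplices of $\FF_H$, use Lemma \ref{unique} to sort the contributing simplices by the unique $F\in\Omega$ containing their top vertex, convert the resulting alternating sum into $e(\widetilde{\mathcal B}^F_{H\bullet})$ by a contractibility argument, and finally pass from $C_G(H)$-orbits to $N_\Gamma(H)$-orbits of the $F$'s. The one place you genuinely diverge is the combinatorial step. The paper does it with a single short exact sequence of chain complexes $0\to\mathcal B^F_{H\bullet}\to[F,H)_\bullet\to\mathcal A_\bullet\to 0$, where $[F,H)=\{S\mid H<S\leq F\}$: since $C_G(-)$ is order-reversing, a chain lies in $\mathcal B^F_H$ exactly when its top term has nontrivial centralizer, so $\mathcal A_\bullet$ is spanned precisely by the contributing chains, and contractibility of $[F,H)$ (it has maximum $F$) gives $e(\mathcal A_\bullet)=-e(\widetilde{\mathcal B}^F_{H\bullet})$ at once. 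Your ``join-type decomposition'' is not literally correct --- the order complex of $[F,H)$ is not the join of $|\mathcal B^F_H|$ with the upper poset, since elements of the two pieces need not be comparable --- but your fallback (split each contributing chain at its first vertex with trivial centralizer and sum over the splitting point) does work: for every chain $\beta$ in $\mathcal B^F_H\cup\{\emptyset\}$, the poset $\{S\in U\mid S>\max\beta\}$ with $U=\{H<S\leq F\mid C_G(S)=1\}$ still has maximum $F$, so the inner alternating sum over nonempty upper chains equals $-1$ independently of $\beta$, and the total collapses to $-e(\widetilde{\mathcal B}^F_{H\bullet})$ as required. As written, the appeal to ``contractibility of the upper poset'' plus ``stripping the cone on $F$'' does not yet constitute a proof, and the sign conventions are left implicit; this step has to be carried out explicitly (your proposed filtration would also do it, at greater length than the paper's exact sequence).

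The second soft spot is the orbit bookkeeping, which you assert rather than derive, and where your parenthetical index is wrong: the $N_\Gamma(H)$-orbit of $F$ breaks into $|N_\Gamma(H):N_F(H)C_G(H)|$ free $C_G(H)$-orbits, not $|N_\Gamma(H):HC_G(H)|$ of them (these differ whenever $N_F(H)\neq H$, for instance for $H$ of order $2$ inside $F=A_5$ in the paper's last example). The correct count uses that $N_\Gamma(F)=F$ for $F\in\Omega$, whence the stabilizer of $F$ in $N_\Gamma(H)$ is $N_\Gamma(H)\cap N_\Gamma(F)=N_F(H)$ and $N_\Gamma(F)\cap C_G(H)=1$; combined with the prefactor $|H|/|N_\Gamma(H):C_G(H)|$ coming from (\ref{also}), this is exactly what produces $|H|/|N_F(H)|=1/|N_F(H):H|$. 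Your final formula is correct, but this cancellation is where the coefficient comes from and cannot be replaced by the heuristic ``$N_F(H)/H$ acts, so divide by its order.'' Everything else --- the case $C_G(H)=1$ via Lemma \ref{maxfin}, the reduction to free orbits via amenability, and the use of Lemma \ref{unique} --- matches the paper's argument.
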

\begin{proof} For each $H\in\FF$, we have to compute the coefficient of $[\Gamma/H]$ in $\chi^\Gamma(\underline{E}\Gamma)$. If $C_G(H)=1$ we have already done it in Lemma \ref{maxfin}. Note that, if $H\in\Omega$, then $F=H$ and ${\mathcal B}_H^F=\emptyset$ thus $e(\widetilde{\mathcal B}_{H\bullet}^F)=1$ and if $H\not\in\Omega$, then by Lemma \ref{unique} we have ${\mathcal B}_H^F\simeq\ast$ so $e(\widetilde{\mathcal B}_{H\bullet}^F)=0$.

So from now on, we assume that $C_G(H)\neq1$.
By (\ref{firstobs}), (\ref{also}) and  (\ref{decom}) the coefficient of $[\Gamma/H]$ is determined by the free $C_G(H)$-orbits of cells in $\FF_H$. Such cells are of the form $\sigma:H<H_0<\ldots<H_t$ with $C_G(H_t)=1$ and Lemma \ref{unique} implies that $H_t$ lies in a uniquely determined $F\in\Omega$ (so every element of the $C_G(H)$-orbit of $\sigma$ corresponds to a $C_G(H)$-conjugated subgroup of $F$).  This means that

 \begin{equation}\label{decom2}\begin{aligned}
 \sum\{(-1)^{t}&\mid \sigma\in\text{free $C_G(H)$-orbits of $t$-cells in }\FF_H\}=\\
&\sum_{F\in\Omega\cap\FF_H/C_G(H)}\sum\{(-1)^{t}\mid H<H_0<\ldots H_t\leq F\text{ with } C_G(H_t)=1\}\\
\end{aligned}\end{equation}

We fix for a while $F\in\Omega$ with $H\leq F$ and consider the poset
$$[F,H):=\{S\mid H<S\leq F\}.$$
Then there is a short exact sequence of chain complexes:
$$0\to\mathcal B_{H\bullet}^F\to [F,H)_\bullet\to\mathcal A_\bullet\to 0$$
where $\mathcal A_\bullet$ is the $\Gamma$-chain complex of the free $\Z$-module generated by the set of chains of the form $H<H_0<\ldots H_t$ with $C_G(H_t)=1$. Considering the augmented versions of the first two chain complexes we get
$$0\to\widetilde{\mathcal B}_{H\bullet}^F\to \widetilde{[F,H)}_\bullet\to\mathcal A_\bullet\to 0.$$
As $[F,H)_\bullet$ is contractible, the ordinary Euler characteristic of $\widetilde{[F,H)}_\bullet$ vanishes, so 

\begin{equation}\label{euler}\sum\{(-1)^{t}\mid H<H_0<\ldots H_t\leq F\text{ with } C_G(H_t)=1\}
=e(\mathcal A_\bullet)=-e(\widetilde{\mathcal B}_{H\bullet}^F)
\end{equation}

At this stage, (\ref{firstobs}), (\ref{also}), (\ref{decom}), (\ref{decom2}) and (\ref{euler}) imply that the coefficient of $[\Gamma/H]$ in $\chi^\Gamma(\underline{E}\Gamma)$ is
$${|H|\over |N_\Gamma(H):C_G(H)|}\sum_{F\in\Omega/C_G(H)\atop H<F}e(\widetilde{\mathcal B}_{H\bullet}^F).
$$
To finish the proof, we only have to see what happens when we consider the elements of $\Omega$ up to $N_\Gamma(H)$-conjugacy instead of up to $C_G(H)$-conjugacy. To do that, observe that whereas all the $C_G(H)$-orbits in $\Omega$ are free (as $N_\Gamma(F)\cap C_G(H)=1$), the stabilizer of the $N_\Gamma(H)$-orbit of $F$ is $N_\Gamma(H)\cap N_\Gamma(F)=N_F(H)$  (recall that by the remark before the definition of $\Omega$, $N_\Gamma(F)=F$). This means that each $N_\Gamma(H)$-orbit can be decomposed exactly on
$|N_\Gamma(H):N_F(H)C_G(H)|$ orbits respect to $C_G(H)$. As
$${|H||N_\Gamma(H):N_F(H)C_G(H)|\over |N_\Gamma(H):C_G(H)|}={|H|\over|N_F(H)|}$$
we are done.\end{proof}


The formula of Theorem \ref{mainformula} can be rewritten in terms of $\Omega$ as follows.

\begin{corollary}\label{mainformula2}
$$\chi^\Gamma(\underline{\operatorname{E}}\Gamma)=\sum_{F\in\Omega/\Gamma}([\Gamma/F]+\sum_{H<F\atop 1<C_G(H)}{1\over|F:H|}e(\widetilde{\mathcal B}_{H\bullet}^F)|H|[\Gamma/H]).$$
\end{corollary}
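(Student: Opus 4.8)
The plan is to derive this straight from Theorem \ref{mainformula} by reorganising its double sum so that the outer index runs over $\Omega/\Gamma$ rather than over $\FF/\Gamma$. First I would isolate the contribution of the subgroups $H$ that already lie in $\Omega$. Since $\Omega$ is closed under $\Gamma$-conjugation and all its members are maximal finite, for such an $H$ the only $F\in\Omega$ with $H\le F$ is $F=H$; then $N_F(H)=H$, $\mathcal B^{H}_{H}=\emptyset$, and (by the computations carried out in the proof of Theorem \ref{mainformula}) the coefficient of $[\Gamma/H]$ produced by such an $H$ is exactly $1$. As $H$ runs over $\Omega/\Gamma$ these terms assemble precisely to $\sum_{F\in\Omega/\Gamma}[\Gamma/F]$. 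Moreover a subgroup strictly contained in some finite subgroup is never $\Gamma$-conjugate to a member of $\Omega$, so in the Corollary the terms $[\Gamma/F]$ and the remaining sum have disjoint supports, exactly as the $H\in\Omega$ and $H\notin\Omega$ parts do in Theorem \ref{mainformula}.

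Next I would discard the vanishing terms. If $C_G(H)=1$ but $H\notin\Omega$, then by Lemma \ref{unique} there is a unique $F\in\Omega$ with $H<F$, and (again as observed in the proof of Theorem \ref{mainformula}) $\mathcal B^{F}_{H}\simeq\ast$, so $e(\widetilde{\mathcal B}^{F}_{H\bullet})=0$; and a maximal finite $H$ with $C_G(H)\neq1$ is contained in no member of $\Omega$. In both situations the coefficient of $[\Gamma/H]$ in Theorem \ref{mainformula} is zero, and such $H$ do not occur in the inner sum of the Corollary either (because of the constraint $1<C_G(H)$, or because one needs $H<F$). Hence only the subgroups $H$ with $C_G(H)\neq1$ that are contained in some member of $\Omega$ need be considered.

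The core of the argument is then the orbit bookkeeping, for which the identity $N_\Gamma(F)=F$ for $F\in\Omega$ (Lemma \ref{maxfin}) is used twice. Fix a $\Gamma$-conjugacy class of such an $H$. On the one hand, sending the $N_\Gamma(H)$-orbit of $F$ to the $\Gamma$-orbit of the pair $(H,F)$ is a bijection from $N_\Gamma(H)\backslash\{F\in\Omega\mid H<F\}$ --- the index set of the coefficient of $[\Gamma/H]$ in Theorem \ref{mainformula} --- onto the set of $\Gamma$-orbits of pairs $(H',F')$ with $F'\in\Omega$, $H'<F'$ and $H'$ conjugate to $H$ in $\Gamma$. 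On the other hand, in the Corollary one runs, for a fixed representative $F$ of $\Omega/\Gamma$, over all subgroups $H'<F$ with $1<C_G(H')$; partitioning these into $F$-conjugacy classes, each $\Gamma$-orbit of pairs gets listed $|F:N_F(H')|$ times, the weight $\tfrac{1}{|F:H'|}e(\widetilde{\mathcal B}^{F}_{H'\bullet})$ being constant along such a class. Since $|F:N_F(H')|\cdot\tfrac{1}{|F:H'|}=\tfrac{1}{|N_F(H'):H'|}$, and both $e(\widetilde{\mathcal B}^{F}_{H'\bullet})$ and $|N_F(H'):H'|$ depend only on the $\Gamma$-conjugacy class of the pair $(H',F)$, the weight the Corollary attaches to each orbit of pairs coincides with the corresponding summand $\tfrac{1}{|N_F(H):H|}e(\widetilde{\mathcal B}^{F}_{H\bullet})$ of Theorem \ref{mainformula}. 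Summing over all orbits of pairs recovers the coefficient of $[\Gamma/H]$, and doing this for every class of $H$ completes the argument.

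I expect the main obstacle to be precisely this last step: verifying that the two reindexings sweep out the same set of $\Gamma$-orbits of pairs $(H,F)$ with no double counting, and that the combinatorial factor $|F:N_F(H)|$ exactly absorbs the discrepancy between the weights $\tfrac1{|F:H|}$ and $\tfrac1{|N_F(H):H|}$. The feature that keeps this manageable is that, for $F\in\Omega$, conjugacy in $N_\Gamma(F)$ is the same as conjugacy in $F$, so the relevant equivalence relations on the two sides of the identity match up cleanly.
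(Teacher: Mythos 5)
Your argument is correct and is essentially the paper's own proof: both reindex the double sum of Theorem \ref{mainformula} over $\Gamma$-orbits of pairs $(H,F)$, using Lemmas \ref{maxfin} and \ref{unique} to dispose of the $C_G(H)=1$ terms and the identity $N_\Gamma(F)=F$ together with the counting factor $|F:N_F(H)|$ to convert $\tfrac{1}{|N_F(H):H|}$ into $\tfrac{1}{|F:H|}$. (Note that, like the paper's display (\ref{cambio}) and Theorem C of the introduction, your computation yields the coefficient $\tfrac{1}{|F:H|}e(\widetilde{\mathcal B}_{H\bullet}^F)$ without the extra factor $|H|$ printed in the Corollary's statement, which appears to be a typo there rather than a defect of your argument.)
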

\begin{proof} Let $H\in\FF$ with $1<C_G(H)$ and choose some $F\in\Omega\cap\FF_H$. Let $F_1,\ldots, F_{t(H)}$ be representatives of the $N_\Gamma(H)$-orbits inside the intersection of  the $\Gamma$-orbit of $F$ with $\FF_H$. Note that we may choose $1=g_1,\ldots, g_{t(H)}\in\Gamma$ such that for $F:=F_1$, $F_i=F^{g^{-1}_i}$. Therefore, $H^{g_i}\leq F$ for any $1\leq i\leq t(H)$. And the set of subgroups of $F$ which are $\Gamma$-conjugated to $H$ is precisely
$$\bigcup_{i=1}^{t(H)}\{H^{g_ix}\mid x\in F/N_F(H^{g_i})\}.$$
Moreover $N_{F_i}(H)\cong N_{F}(H^{g_i})$ and 
$e(\widetilde{\mathcal B}_{H\bullet}^{F_i})=e(\widetilde{\mathcal B}_{H^{g_i}\bullet}^{F})$. So we get

\begin{equation}\label{cambio}\begin{aligned}
\sum_{H\in\FF/\Gamma\atop 1<C_G(H)}(\sum_{F\in\Omega\cap\FF_H/N_\Gamma(H)}{1\over{|N_F(H):H|}}e(\widetilde{\mathcal B}_{H\bullet}^F))[\Gamma/H]=\\
\sum_{H\in\FF/\Gamma\atop 1<C_G(H)}(\sum_{F\in\Omega/\Gamma\atop H<_\Gamma F}\sum_{i=1}^{t(H)}{1\over{|N_{F_i}(H):H|}}e(\widetilde{\mathcal B}_{H\bullet}^{F_i}))[\Gamma/H]=\\
\sum_{F\in\Omega/\Gamma}(\sum_{H<F\text{ up to $\Gamma$-conj.}\atop 1<C_G(H)}\sum_{i=1}^{t(H)}{1\over{|N_{F}(H^{g_i}):H^{g_i}|}}e(\widetilde{\mathcal B}_{H^{g_i}\bullet}^{F})[\Gamma/H])=\\
\sum_{F\in\Omega/\Gamma}(\sum_{H<F\atop 1<C_G(H)}{1\over{|F:N_F(H)||N_{F}(H):H|}}e(\widetilde{\mathcal B}_{H\bullet}^{F})[\Gamma/H]).
\end{aligned}\end{equation}

The result now follows using Lemma \ref{maxfin}.

\end{proof}

Now, consider the map $f:A(\Gamma)\to\Q$ taking $f([\Gamma/H])\to{1\over|H|}$. Then $f(\chi^\Gamma(\underline{\operatorname{E}}\Gamma))$ is the ordinary equivariant Euler characteristic.
By \cite{Brown} Proposition 7.3 b'), as $\Gamma$ is amenable,
$f(\chi^\Gamma(\underline{\operatorname{E}}\Gamma))=0.$
This can also be seen using the previous formula. Note that for any $F\in\Omega$:
$$\begin{aligned}
f(\sum_{H<F\atop1<C_G(H)}{1\over|F:H|}e(\widetilde{\mathcal B}_{H\bullet}^F)[\Gamma/H])=
{1\over|F|}\sum_{H<F\atop1<C_G(H)}e(\widetilde{\mathcal B}_{H\bullet}^F)=
-{1\over|F|}e(\mathcal B_\bullet^F)=-{1\over|F|}
\end{aligned}$$
where $\mathcal B^F$ is the (contractible) poset of those $\{1\leq H<F\mid 1<C_G(H)\}$
so we get
$$f(\chi^\Gamma(\underline{\operatorname{E}}\Gamma))=\sum_{F\in\Omega/\Gamma}{1\over|F|}-{1\over|F|}=0.$$

The formulae of Theorem \ref{mainformula} and Corollary \ref{mainformula2} can be used to compute several Euler characteristics. For example, to compute the (non-equivariant) Euler characteristic of the quotient complex we have to map $[\Gamma/H]\to 1$
$$e(\underline{\operatorname{E}}\Gamma/\Gamma)=\sum_{F\in\Omega/\Gamma}(1+{1\over{|F|}}\sum_{H<F\atop 1<C_G(H)}e(\widetilde{\mathcal B}_{H\bullet}^F)|H|)$$

Or to compute the Euler characteristic defined in string theory (\cite{Adem}, \cite{Thevenaz}, \cite{AtiyahSegal}), where $[\Gamma/H]\mapsto\dim_\C R(H)\otimes\C $ (here, $R(H)$ is the complex representation ring of the finite group $H$):

$$e^{\text{string}}(\underline{\operatorname{E}}\Gamma)=\sum_{F\in\Omega/\Gamma}(\dim_\C R(F)\otimes\C +{1\over{|F|}}\sum_{H<F\atop 1<C_G(H)}e(\widetilde{\mathcal B}_{H\bullet}^F)|H|\dim_\C R(H)\otimes\C ).$$

Assume that $K\leq\GL_n(\Z)$ is a finite group. Then $K$ acts on the $n$-torus $X=\T^n$ and $X/K$ is a toroidal orbifold (see \cite{Adem}, Section 1.2 and Example 2.23). Obviously, $K$ also acts on $A=\Z^n$ and the split extension $\Gamma= K\ltimes A$ associated to this action is precisely the fundamental group of the orbifold $X/K$. It is also the group of lifts to $\R^n$ of the action of $K$ on $\T^n$. As $\Gamma$ acts on $\R^n$ by affine transformations one easily deduces that $\R^n$ is a model for $\egamma$ and $\R^n/\Gamma=\T^n/K$. So the formula of Corollary \ref{mainformula2} can be used to compute $\chi^\Gamma(\underline{\operatorname{E}}\Gamma)$ and $e^{\text{string}}(\R^n/\Gamma)=e^{\text{string}}(\T^n/K)$. We will do this in some examples, in which we use the following notation: Let $K,A,\Gamma$ be as before
and put $\bar\Omega:=\{F\leq K\mid 1<C_A(F)\}$. For each $F\in\bar\Omega$, let $a_F$ be the number of $\Gamma$-conjugacy classes of complements of $A$ in $FA$ which are maximal finite in $\Gamma$. Corollary \ref{mainformula2} yields
$$\chi^\Gamma(\underline{E}\Gamma)=\sum_{F\in\bar\Omega}a_F([K/F]+\sum_{H<F\text{ up to $F$-conjugation}\atop 1<C_G(H)}{1\over|N_F(H):H|}e(\widetilde{\mathcal B}_{H\bullet}^F)[K/H])$$

Moreover, in this case there is an easy way to decide wether for a subgroup $H\leq K$ we have $C_A(H)=1$. As $K$ acts on $A$, by extending scalars we get a $\C K$-module structure in $A\otimes_\Z\C$. Let $\phi$ be the complex character of $K$ associated to this module. Then
\begin{equation}\label{frobenius} C_A(H)=1\text{ if and only if }(\phi|_H,1_H)_H\neq 0
\end{equation}
where $(-,-)_H$ is the ordinary Frobenius product, $\phi|_H$ is the restriction of $\phi$ to $H$ and $1_H$ is the trivial character of $H$.

 For example, we consider the following three orbifolds of \cite{Adem} Examples 1.9,1.10 and 1.11 (also considered in \cite{AdemKlaus} Section 3).

\begin{example} Let $\T^4/K$ be the Kummer surface given by the action of $K=C_2=<x>$ on $\T^4$ via $x=-I_4$. Let $A=\Z^4$ seen as $K$-module with the previous action and $\Gamma=K\ltimes A$. Then $\T^4=\underline{\operatorname{E}}\Gamma/A$. One easy computation yields $|\cohom1KA|=16$, $\bar\Omega=\{K\}$ and $\{H\leq K\mid 1<C_A(H)\}=\{1\}$. So
$$\chi^\Gamma(\R^4)=16([K/K]-{1\over 2}[K/1])=16[K/K]-8[K/1]$$
and
$$e^{\text{string}}(\T^4/K)=32-8=24$$
\end{example}

\begin{example} Let $\T^6/K$ with $K=C_4=<x>$ acting via
$$x=\begin{pmatrix}-I_2&0&0\\
0&U&0\\
0&0&U\\
\end{pmatrix}$$
with $U=\begin{pmatrix}0&-1\\
1&0\\\end{pmatrix}$, $A=\Z^6$ seen as $K$-module and $\Gamma=K\ltimes A$. Then $\T^6=\underline{\operatorname{E}}\Gamma/A$. Now $|\cohom1KA|=16$, $\bar\Omega=\{K\}$ and $\{H\leq K\mid 1<C_A(H)\}=\{1,<x^2>\}$. We get
$$\chi^\Gamma(\R^6)=16([K/K]-{1\over 2}[K/<x^2>])=16[K/K]-8[K/<x^2>],$$
$$e^{\text{string}}(\T^6/K)=16\cdot 4-16=48.$$
\end{example}

\begin{example} Let $\T^6/K$ with $K=C_2\times C_2=<x,y>$ acting via
$$x=\begin{pmatrix}-I_2&0&0\\
0&-I_2&0\\
0&0&I_2\\
\end{pmatrix}, y=\begin{pmatrix}-I_2&0&0\\
0&I_2&0\\
0&0&-I_2\\
\end{pmatrix},$$
 $A=\Z^6$ seen as $K$-module and $\Gamma=K\ltimes A$. Then $\T^6=\underline{\operatorname{E}}\Gamma/A$. Now $|\cohom1KA|=2^6$, $\bar\Omega=\{K\}$ and $\{H\leq K\mid 1<C_A(H)\}=\{1,<x>,<y>,<xy>\}$. With the formula
 $$\chi^\Gamma(\R^6)=2^6([K/K]+{2\over 4}[K/1]-{1\over 2}[K/<x>]-{1/2}[K/<y>]-{1\over 2}[K/<xy>])$$
 $$=64[K/K]+{32}[K/1]-{32}[K/<x>]-{32}[K/<y>]-{32}[K/<xy>],$$
 $$e^{\text{string}}(\T^6/K)=96.$$
\end{example}

\begin{example} Let $\Gamma$ be the group of Example \ref{explicitBredon}.
Using the character $\phi$ associated to the given matrix representation of $F$, an easy computation implies that
$$\{1\leq H<F\mid 1<C_A(H)\}=\{1,<x>,<x^y>,<x^{y^2}>\},$$
$$\{1\leq H<<ya>\mid 1<C_A(H)\}=\{1\}.$$
So the formula of Corollary \ref{mainformula2} yields
$$\begin{aligned}
\chi^\Gamma(\underline{\operatorname{E}}\Gamma)={2\over 6}[\Gamma/1]-[\Gamma/<x>]+[\Gamma/F]-{1\over 3}[\Gamma/1]+[\Gamma/<ya>]=\\
-[\Gamma/<x>]+[\Gamma/F]+[\Gamma/<ya>].
\end{aligned}$$
And from this, we have $e^{\text{string}}(\underline{\operatorname{E}}\Gamma/\Gamma)=4$.

\end{example}

\begin{example} Let $K=A_5$, the alternating group on 5 letters act on $A=\Z^4$ via
$$x=(12)(34)\mapsto\begin{pmatrix}
1&0&0&0\\
0&0&1&0\\
0&1&0&0\\
-1&-1&-1&-1\\
\end{pmatrix},$$
$$y=(135)\mapsto\begin{pmatrix}
0&1&0&0\\
0&0&0&1\\
0&0&1&0\\
1&0&0&0\\
\end{pmatrix}$$
and again consider the associated action of $K$ on $\T^4$. Now, $\cohom1KA=0$ so $a_K=1$ and $\{H\leq K\mid C_A(H)=1\}$ consists of all the subgroups of  $K$ except of all the 5-Sylows and their normalizers (this is easily computed using the character table and (\ref{frobenius})). Moreover, if $R$ is one of the 5-Sylow subgroups, then $\cohom1RA=\Z/5\Z$ and $\cohom1{N_K(R)}A=0$. From this we see that $\bar\Omega=\{K,R\}$. Under the action of $N_K(R)$ there are exactly two non-trivial orbits in $\cohom1RA$ thus $a_R=2$. Denote $C=<x>$, $P$ for a 2-Sylow and $Q$ for a 3-Sylow. Observe that $N_K(C)\cong P$, $N_K(P)\cong A_4$, $N_K(Q)\cong S_3$, $N_K(N_K(P))=N_K(P)$, $N_K(N_K(Q))=N_K(Q)$ so

$$\begin{aligned}
\chi^K(\T^4)=[K/K]+{1\over 60}e(\widetilde{\mathcal B}_{1\bullet}^K)[K/1]+{1\over2}e(\widetilde{\mathcal B}_{C\bullet}^K)[K/C]+
{1\over 3}e(\widetilde{\mathcal B}_{P\bullet}^K)[K/P]+\\
e(\widetilde{\mathcal B}_{N_K(P)\bullet}^K)[K/N_K(P)]+
{1\over 2}e(\widetilde{\mathcal B}_{Q\bullet}^K)[K/Q]+e(\widetilde{\mathcal B}_{N_K(Q)\bullet}^K)[K/N_K(Q)]+\\
2([K/R]+{1\over 5}e(\widetilde{\mathcal B}_{1\bullet}^R)[K/1]).\\
\end{aligned}$$
Also, $e(\widetilde{\mathcal B}_{1\bullet}^K)=-36$, $e(\widetilde{\mathcal B}_{C\bullet}^K)=2$, $e(\widetilde{\mathcal B}_{P\bullet}^K)=0$, $e(\widetilde{\mathcal B}_{N_K(P)\bullet}^K)=-1$, $e(\widetilde{\mathcal B}_{Q\bullet}^K)=2$, $e(\widetilde{\mathcal B}_{N_K(Q)\bullet}^K)=-1$ $e(\widetilde{\mathcal B}_{1\bullet}^R)=-1$
so the formula gives us:
$$\begin{aligned}
\chi^K(\T^4)=[K/K]-[K/1]+[K/C]
-[K/N_K(P)]+[K/Q]-[K/N_K(Q)]+
2[K/R].\\
\end{aligned}$$
and
$e^{\text{string}}(\T^4/K)=5-1+2-4+3-3+10=12$.
\end{example}

\bibliographystyle{abbrv}

\end{document}